\definecolor{Magenta}{cmyk}{0,1,0,0}
\definecolor{dgreen}{rgb}{0.,0.6,0.}
\newtheorem{theorem}{Theorem}[]
\newtheorem{definition}{Definition}
\newtheorem{lemma}{Lemma}
\newtheorem{proposition}{Proposition}
\theoremstyle{remark}
\newtheorem{remark}{Remark}
\newcommand*{\mvcenter}[1]{\vcenter{\hbox{$\displaystyle #1$}}}
\newcommand{\Zdisc}{disc_{\mathbb{Z}}}
\newcommand{\disc}{disc}
\newcommand{\ncp}[1]{\langle#1\rangle}
\newcommand\tab[1][1cm]{\hspace*{#1}}
\newcommand{\Q}{\mathbb{Q}}
\newcommand{\Qbar}{\overline{\mathbb{Q}}}
\newcommand{\Z}{\mathbb{Z}}
\newcommand{\Zbar}{\overline{\mathbb{Z}}}
\newcommand{\R}{\mathbb{R}}
\newcommand{\C}{\mathbb{C}}
\newcommand{\K}{\mathbb{K}}
\newcommand{\m}{\mathfrak{m}}
\renewcommand{\O}{\mathcal{O}}
\newcommand{\F}{\mathbb{F}}
\begin{document}

\title[Weakly Divisible Rings]{Weakly Divisible Rings}

\author[G.D.Patil]{Gaurav Digambar Patil}
\address{Department of Mathematics, University of Toronto, Bahen Centre, 40 St. George Street, Room 6290, Toronto, Ontario, Canada, M5S 2E4}
\email{g.patil@mail.utoronto.ca}

\date{\today}

\begin{abstract}
   We define a new class of rings parameterized by binary forms of a certain type, and give an effective lower bound for the number of such rings whose discriminant is less than a bound $X$. We also obtain a lower bound for the number of number fields whose ring of integers lies in the above class and whose discriminant is less than a bound $X$. Our results improve an estimate of Bhargava-Shankar-Wang in \cite{bhargava2022squarefree}. In particular we show the following:
   \begin{itemize}
   \item When $n\ge 4,$ the number of rings of rank $n$ over $\Z$ with discriminant less than or equal to $X$ is 
   $$
   \gg_n X^{\frac{1}{2}+\frac{1}{n-\frac{4}{3}}}.
   $$
   \item When $n\ge 6,$ the number of number fields of degree $n$ with discriminant less than $X$ is
   $$
   \gg_{n,\epsilon} X^{\frac{1}{2} +\frac{1}{n-1} +  \frac{(n-3)r_n}{(n-2)(n-1)}-\epsilon}
   $$ 
   where $r_n=\frac{\eta_n}{n^2-4n+3-2\eta_n (n+\frac{2}{n-2})}$ and where $\eta_n$ is $\frac{1}{5n}$ if $n$ is odd and is $\frac{1}{88n^6}$ when $n$ is even.
   \end{itemize}
\end{abstract}
\maketitle

\section{\bf Introduction}

\bigskip

\begin{definition}
    We define
    $$
    N(X:n):=\#\{K:[K:\Q]=n, Gal(\bar{K}/\Q)\simeq S_n,\disc(K)<X\}.
    $$
\end{definition}
Malle's Conjecture in \cite{MalleConjII} for the group $S_n$ tells us that
$$
N(X:n)\simeq c_n X.
$$
where $c_n$ is a constant dependent on $n.$

Previously, Malle's Conjecture was known for the case $n=3,$ due to the work of Davenport and Heilbronn building off the parametrization of cubic rings developed by Levi-Delone-Faddeev-Gan-Gross-Savin. Until recently there was very little progress on Malle's conjecture. Bhargava showed Malle's conjecture for $n=4$ and $n=5$, using his parametrizations for quartic and quintic rings.

In general, it is often difficult to give any lower or upper bound for $N(X:n)$ of expected order. There has been some recent progress towards the upper bounds for $N(X:n).$  

We will focus on the lower bound in this paper. 
In \cite{BSW-monicsquarefree}, Bhargava-Shankar-Wang show that 
$$
N(X,n)\gg_n X^{\frac{1}{2}+\frac{1}{n}}
$$
and in \cite{bhargava2022squarefree} they show 
$$
N(X,n)\gg_n X^{\frac{1}{2}+\frac{1}{n-1}}.
$$
The first estimate, counts the number of number fields whose ring of integers is a monogenic ring. A monogenic ring is a ring that can be identified as $\Z[\alpha]$ where $\alpha\in \Zbar.$ Hence, a monogenic ring has a monic polynomial canonically associated to it. This allows us to count monic polynomials instead of counting number fields.

Similarly, the second result counts the number of number fields whose ring of integers is a binary ring. A binary ring is a ring of the form $\Z[\delta]\cap\Z[\delta^{-1}]$ where $\delta\in \Qbar.$ Hence, one can naturally associate a binary form of degree equal to the degree of the number field to a binary ring. This again allows us replace the problem of counting number fields with the problem of counting binary forms.

It is expected that the number of number fields whose ring of integers in monogenic and have a discriminant bounded by $X$ is $O(X^{\frac{1}{2}+\frac{1}{n}}).$ Similarly, we expect that the number of number fields whose ring of integers is a binary ring and whose discriminant does not exceed $X$ is $O(X^{\frac{1}{2}+\frac{1}{n-1}}).$ Thus, finding more general parametrizations for rings and therefore rings of integers of number fields is crucial to obtaining better lower bounds on $N(X:n)$.

This is further evident in the cases where Malle's Conjecture for non-abelian Galois groups from \cite{MalleConjII} is completely settled. Bhargava, for example, proves Malle's Conjecture for $n=4$ and $n=5$ using interesting parametrizations for quartic and quintic rings. In this paper, we provide a parametrization that works to improve the power of $X$ in the lower bound of Bhargava-Shankar-Wang.

The second problem that comes up in getting a lower bound is injectivity in the parametrization that we are discussing. At times the parametrization is sufficiently restrictive that one does not have to specifically worry about this. Cases where this happens include 
\begin{itemize}
    \item $n=2$ case, where the problem reduced to counting monic polynomials where the second lead co-efficient is $0$ or $1$ and where the polynomial has a square free discriminant.
    \item $n=3$ case, where the Levi-Delone-Faddeev-Gan-Gross-Savin classification of cubic rings gives tells us that two binary cubic forms will be associated to the same cubic ring if and only if the forms are in the same orbit of the canonical action of $GL_2(\Z)\times GL_1(\Z)$ on the space of binary cubic forms.
    \item $n=4$ and $n=5$, where again Bhargava's classification laws give a clear arithmetic answer to when the rings associated to corresponding arithmetic objects are the same.
\end{itemize}
This problem has been handled in a more effective sense in \cite{BSW-monicsquarefree} and in \cite{bhargava2022squarefree}, where they find a subspace where injectivity holds.

The third and the final problem is showing an arithmetic Bertini type theorem in the context of the parametrization. This allows us to sieve out objects that do not correspond to the full ring of integers. This occupies most of the paper in \cite{BSW-monicsquarefree} and \cite{bhargava2022squarefree}. 
\medskip\par
In this paper, our goal is to replace monogenic and binary rings with another class, namely
a class that we call {\em weakly divisible rings} (defined in \cref{Defining}). The corresponding polynomials were 
actually considered in the work of Bhargava, Shankar and Wang where they appear in error-term estimates. 
In this paper, we consider the binary ring associated to such polynomials and then construct an explicit
larger ring which lies between this binary ring and the full ring of integers of the associated
quotient field. We then count the number of such rings.
\medskip\par
In section 2, we state the main theorems. 
In section 3, we start by reviewing the properties of a binary ring and then define ``weakly divisible" polynomials and ``weakly divisible rings" associated to said polynomials. This gives a more general parametrization than binary rings which in turn more general than monogenic rings. 
In section 4, we discuss injectivity of the association of ``weakly divisible ring" to appropriate weakly divisible polynomial. We use techniques in Minkowski reduction similar to those in \cite{BSW-monicsquarefree} and adapt them to our purposes.
In section 5, we count the number of rings with a bounded discriminant to show \cref{1}.
In section 6, we use the tail end estimates in \cite{bhargava2022squarefree} and sieve the weakly divisible rings to get number fields with a weakly divisible ring of integers showing \cref{2}. This sieve can be easily better adapted to sieving weakly divisible rings, which we will do in some upcoming papers.

\section{\bf Main theorems}

\bigskip

We show the following theorems:
\begin{theorem}[Ring Count]\label{1}
    When $n\ge 4,$ the number of isomorphism classes of integral domains of rank $n$ over $\Z,$  with discriminant less than or equal to $X$ is 
    $$
    \gg_n X^{\frac{1}{2}+\frac{1}{n-\frac{4}{3}}}
    $$ 
\end{theorem}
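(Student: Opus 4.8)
The strategy follows the Bhargava--Shankar--Wang template: produce a large, essentially injective family of forms, read off the rings they parametrize, and discard the degenerate ones. By the construction underlying \cref{Defining}, each weakly divisible binary $n$-ic form $f\in\Z[x,y]$ determines a weakly divisible ring $R_f$, a $\Z$-order in $K_f:=\Q[x]/(f(x,1))$ that contains the binary ring $R_f^{\mathrm{bin}}$ with an explicit finite index $m(f)$, so that $\disc(R_f)=\disc(f)/m(f)^2$ and $R_f^{\mathrm{bin}}$, hence $R_f$, is free of rank $n$ over $\Z$. When $f(x,1)$ is irreducible over $\Q$, the algebra $K_f$ is a field and $R_f$ is an integral domain. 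Thus it suffices to (i) exhibit $\gg_n X^{\frac12+\frac1{n-4/3}}$ irreducible weakly divisible forms $f$ with $|\disc(R_f)|\le X$, and (ii) bound by $O_n(1)$ the number of such $f$ giving any fixed ring up to isomorphism.

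For (ii) I would invoke the injectivity statement of Section~4: after restricting to a Minkowski-reduced sub-family --- the analogue, for weakly divisible forms, of the reduced fundamental domains of \cite{BSW-monicsquarefree} --- the assignment $f\mapsto R_f$ has fibres of size $O_n(1)$ (modulo the finite group acting on the form data), so a lower bound for the number of forms in the sub-family descends, up to a constant, to a lower bound for the number of isomorphism classes of rings. This is where the Minkowski-reduction arguments adapted in Section~4 do their work.

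For (i), the quantitative heart, I would count the forms in a box. Fix a scale $H$; take the free coefficients $a_i$ of $f$ in weighted ranges $|a_i|\ll H^{w_i}$ with weights $w_i$ to be optimized, and let the weak divisor $m$ range over its admissible values up to the largest size the box allows. Imposing weak divisibility at the primes dividing $m$ is a congruence condition thinning the lattice of admissible $f$ by a factor $\asymp m^{c}$, where $c$ is the structural exponent produced in Section~3. Since $\disc(R_f)=\disc(f)/m^2$, the requirement $|\disc(R_f)|\le X$ on the whole box reads $\disc(f)\ll m^2X$, which fixes the relation between $H$, the $w_i$, and $m$. A geometry-of-numbers count then yields $\gg_n H^{\sum w_i}/m^{c}$ admissible $f$ for each $m$; summing over $m$ (the sum being dominated by $m$ near its maximum), substituting for $H$ and $m$ in terms of $X$, and choosing the weights and the divisor range to maximize the resulting power of $X$, produces the exponent $\frac12+\frac1{n-4/3}$ --- the value $\tfrac43$ being exactly the point where the discriminant gain $m^{-2}$ balances the density loss $m^{-c}$. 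Finally one checks that the degenerate $f$ --- those with $f(x,1)$ reducible, with $\disc(f)=0$, or lying on the proper subvarieties where the reduction theory of Section~4 breaks --- contribute only $O\!\left(X^{\frac12+\frac1{n-4/3}-\delta}\right)$ for some $\delta>0$; this is a routine sieve/geometry-of-numbers estimate, since each bad locus is cut out by a nonzero polynomial. Combining (i) and (ii) gives the asserted bound.

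The step I expect to be hardest is the optimization in (i): one must pick the weights $w_i$ and the range of $m$ so that the discriminant inequality holds \emph{uniformly} on the box rather than merely on average, while the defining congruences of weak divisibility remain ``independent enough'' to yield the clean loss factor $m^{c}$; it is this joint constraint that pins the exponent to $n-\tfrac43$ and not to something weaker. A secondary difficulty is ensuring that the Minkowski-reduced sub-family of Section~4 still retains a positive proportion of the forms counted in (i), so that no main term is lost to the finite multiplicity in (ii).
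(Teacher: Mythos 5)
Your proposal follows essentially the same route as the paper: count weakly divisible forms $(f,l)$ in weighted boxes (density loss $m^{-2}$ against discriminant gain $\disc(f)/m^2$), use the Minkowski-reduction injectivity of Section~4 to make the map $(f,m,l)\mapsto R'_{(f_l,m)}$ finite-to-one on a reduced positive-proportion subfamily, impose irreducibility by congruence conditions, optimize the box parameters and the range of $m$ subject to the reduction constraint, and sum over $m$. The only imprecision is attributing the exponent $n-\tfrac43$ to a balance between the $m^{-2}$ density loss and the $m^{2}$ discriminant gain; in the paper those essentially cancel, and the gain over $X^{\frac12+\frac1{n-1}}$ comes from how large the range of $m$ can be taken, which is capped by the injectivity condition $t^{n-2}\gg m$ --- a point your closing paragraph does acknowledge as the joint constraint.
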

The previously best available bound here was $\gg_n X^{\frac{1}{2} +\frac{1}{n-1}}.$ It isn't mentioned anywhere explicitly, but we suspect it was in the original Nakagawa Paper \cite{Nakagawa1989}.

We use the power saving nature of the tail estimate theorems in the Bhargava-Shankar-Wang's work in \cite{bhargava2022squarefree} to show that
\begin{theorem}[Number-field count]\label{2}
    When $n\ge 6,$ the number of isomorphism classes of degree $n$-number fields with Galois Group $S_n$ with discriminant less than or equal to $X$ is
    $$
    \gg_{n,\epsilon} X^{\frac{1}{2} +\frac{1}{n-1} +  \frac{(n-3)r_n}{(n-2)(n-1)}-\epsilon}
   $$ 
   where $r_n=\frac{\eta_n}{n^2-4n+3-2\eta_n (n+\frac{2}{n-2})}$ and where $\eta_n$ is $\frac{1}{5n}$ if $n$ is odd and is $\frac{1}{88n^6}$ when $n$ is even.
\end{theorem}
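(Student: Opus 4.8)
The plan is to combine the ring-counting result of \cref{1} with a sieve that isolates those weakly divisible rings which are \emph{maximal}, i.e.\ which actually equal the ring of integers $\O_K$ of their quotient field $K$, together with a genericity argument ensuring that the Galois group of $K$ is $S_n$. First I would recall from \cref{1} that weakly divisible rings of rank $n$ and discriminant at most $X$ number $\gg_n X^{1/2+1/(n-4/3)}$, but with a crucial refinement: I would re-run the count keeping track of the parameters of the defining weakly divisible polynomial (its degree-$n$ binary form $f$ together with the auxiliary divisibility data), so that the family is presented as a set of integer points in a box whose side lengths are explicit powers of $X$. The reason for this bookkeeping is that the sieve in the next step needs the family to be ``well-distributed'' modulo $p^2$ for all primes $p$, and that requires knowing the shape of the region being counted.

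Next I would invoke the power-saving tail estimates of Bhargava--Shankar--Wang from \cite{bhargava2022squarefree}. Their results give, for a family of binary forms of degree $n$ ordered by discriminant (or height), an upper bound of the form $O(Y^{\,\theta})$ on the number of forms in the family of size $\le Y$ for which $p^2 \mid \operatorname{disc}$ for some prime $p$ in a suitable ``tail'' range (say $p > M$), where the exponent $\theta$ is \emph{strictly smaller} than the exponent governing the main term, by a margin controlled by the quantities $\eta_n$ appearing in the statement. The split into $n$ odd versus $n$ even, with $\eta_n = \tfrac{1}{5n}$ respectively $\tfrac{1}{88 n^6}$, is exactly the shape of the savings available in \cite{bhargava2022squarefree} (the even case is weaker because the relevant square-free sieve loses more there). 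I would then run the standard inclusion–exclusion / Ekedahl-type geometric sieve: for small primes $p \le M$ one removes weakly divisible polynomials that are non-maximal at $p$ by a congruence condition cutting out a positive proportion; for large primes $p > M$ one uses the tail estimate to show the total contribution of non-maximal polynomials is of strictly smaller order than the main term, provided $M$ is chosen as an appropriate power of $X$. Optimizing the exponent $M = M(X)$ against the competing error terms is precisely what produces the bound $r_n = \dfrac{\eta_n}{n^2 - 4n + 3 - 2\eta_n\bigl(n + \frac{2}{n-2}\bigr)}$, and adding the resulting gain $\dfrac{(n-3) r_n}{(n-2)(n-1)}$ to the ring-count exponent $\tfrac12 + \tfrac1{n-1}$ gives the stated lower bound (the $\tfrac1{n-1}$ rather than $\tfrac1{n-4/3}$ reflects that, for number fields, one must restrict to the injective locus of \cite{bhargava2022squarefree}, which is the binary-ring sublocus). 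Finally I would note that the condition $\operatorname{Gal} \simeq S_n$ is generic — it fails only on a thin set (e.g.\ by Hilbert irreducibility, or by an explicit mod-$p$ argument forcing a transposition and an $n$-cycle) — so removing it costs nothing in the exponent, and that maximality at all primes forces the weakly divisible ring to be $\O_K$, so the surviving rings are in bijection with the desired number fields up to the bounded multiplicity from the injectivity analysis of section 4. The constraint $n \ge 6$ enters because for $n=4,5$ the gain $r_n$ is dominated by error terms that the tail estimates of \cite{bhargava2022squarefree} do not control well enough, and also because Malle-type lower bounds in those ranges are already known by Bhargava's exact parametrizations.

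The main obstacle I expect is the interface between the weakly divisible parametrization of section 3 and the tail estimates of \cite{bhargava2022squarefree}: those estimates are stated for binary forms (or for the binary-ring locus), whereas a weakly divisible ring carries extra divisibility data beyond its binary form, so one must check that (i) the map from weakly divisible polynomials to their underlying binary forms is controlled enough that square-divisibility of $\operatorname{disc}$ can be read off, and (ii) the tail estimate, which is an average over a box in form-space, can be upgraded to an average over the larger box of weakly divisible data without losing the power saving — in particular that the extra parameters do not concentrate the distribution modulo $p^2$. A secondary technical point is ensuring the small-prime sieve and the injectivity count of section 4 are compatible: one needs the congruence conditions defining maximality at $p \le M$ to be preserved under (or at least transverse to) the $\mathrm{GL}_2(\Z)\times\mathrm{GL}_1(\Z)$-reduction used to get injectivity. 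Neither of these is conceptually new — both are handled in principle by the Bhargava--Shankar--Wang framework — but verifying them carefully for the weakly divisible class is where the real work lies.
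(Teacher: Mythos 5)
Your proposal follows essentially the same route as the paper: a squarefree-type sieve on the weakly divisible polynomials, with explicit local densities at small primes and the power-saving tail estimates of Bhargava--Shankar--Wang (Corollary 6.27 of \cite{bhargava2022squarefree}) controlling the large primes, followed by a sum over $m$ in the admissible range $m \ll X^{r_n}$ to produce the gain $\frac{(n-3)r_n}{(n-2)(n-1)}$ over the base exponent $\frac{1}{2}+\frac{1}{n-1}$, with the $S_n$ condition imposed by congruence conditions at a few small primes. The one slight misattribution is your explanation of why the base exponent is $\frac{1}{n-1}$ rather than $\frac{1}{n-\frac{4}{3}}$: it is not a restriction to the binary-ring locus of \cite{bhargava2022squarefree}, but rather that the sieve forces $t\simeq m^{1/(n-2)}$ and confines $m$ to the tiny range $m\le X^{r_n}$, so the sum over $m$ contributes only the small extra gain instead of the full boost obtained in the ring count.
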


This is an improvement on the lower bound for the same quantity in \cite{bhargava2022squarefree}. In that paper the lower bound is 
$$
\gg_n X^{\frac{1}{2}+\frac{1}{n-1}}.
$$
For example, when $n=6$ this lower bound amounts to 
$$
N(X,6)\gg X^{0.7}
$$
where as our bound gives, 
$$
N(X,6)\gg X^{0.70000000243\cdots}.
$$
For $n=7$, the previous lower bound amounts to 
$$
N(X,7)\gg X^{0.66666666\cdots}
$$
where as our bound gives, 
$$
N(X,7)\gg X^{0.66682824365\cdots}.
$$
We believe this is the best lower bound in the literature, at this moment. Our result is not optimal as the tail end estimates used can be better adapted to this specific sieving problem. In a future paper we will adapt the sieve appropriately to give an even better lower bounds.
\section{\bf Defining Weakly Divisible Rings}\label{Defining}

\bigskip

In this section, we look at Binary Rings associated to {\em weakly divisible polynomials} as defined in \cite{bhargava2022squarefree} and \cite{BSW-monicsquarefree}. We define weakly divisible ring using the binary ring and associate it to said polynomial.

We recall the following theorem from \cite{MM-ONEFINE} defining Binary Rings.
\par 
Let 
\begin{equation*}
  f(X,Y):= a_nX^n+a_{n-1}X^{n-1}Y +\cdots +a_0Y^n
\end{equation*} 
denote a binary form of degree $n$. Let $a_n\neq 0$ and suppose that $\delta$ denotes the image of $X$ in the algebra $\Q[X]/(f(X,1)).$ 
\begin{definition}When $a_n\neq 0,$ we define
\begin{equation}\label{basis-binary}
    R_f:=\Z\ncp{1,\: a_n \delta , \: a_n\delta^2+a_{n-1}\delta,\:\cdots, \:\sum_{i=0}^{k-1}a_{n-i}\delta^{k-i},\:\cdots,\:\sum_{i=0}^{n-2}a_{n-i}\delta^{n-1-i}}.
\end{equation}

We set 
\begin{align}
    \ncp{B_0,B_1,\cdots,B_{n-1}}:=& \ncp{1,\: a_n \delta , \: a_n\delta^2+a_{n-1}\delta,\:\cdots, \:\sum_{i=0}^{k-1}a_{n-i}\delta^{k-i},\:\cdots,\:\sum_{i=0}^{n-2}a_{n-i}\delta^{n-1-i}}\\
    \nonumber \textit{ that is,}\\
    B_0:=&1\\
    B_1:=&a_n\delta+a_{n-1}\\
    \nonumber&\cdots\\
    \nonumber&\cdots\\
    \nonumber&\cdots\\
    B_k:=&a_n\delta^k +a_{n-1}\delta^{k-1}+\cdots+a_{n-k+1}\delta+a_{n-k}\\
    \nonumber&\cdots\\
    B_{n-1}:=&a_n\delta^{n-1} +a_{n-1}\delta^{n-2}+\cdots+a_{2}\delta +a_1
\end{align}
\end{definition}
\begin{remark}
    We refer to this basis as the canonical basis attached to $f.$ 
\end{remark}

\begin{theorem}
    When $f$ is integral(i.e. $f$ is a binary form of degree $n$ with integer coefficients),  $R_f$ is a ring of rank $n$ over $\Z.$
\end{theorem}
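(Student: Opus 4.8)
The plan is to prove two things: that $R_f$ is a free $\Z$-module of rank $n$, and that $R_f$ is closed under multiplication; since $B_0=1$ lies in $R_f$, these together show $R_f$ is a ring of rank $n$ over $\Z$. The rank statement is immediate: because $a_n\ne 0$, the polynomial $f(X,1)$ has degree exactly $n$, so $1,\delta,\delta^2,\dots,\delta^{n-1}$ form a $\Q$-basis of $\Q[X]/(f(X,1))$, and the defining expansions present the change-of-basis matrix from $(1,\delta,\dots,\delta^{n-1})$ to $(B_0,B_1,\dots,B_{n-1})$ as lower-triangular with diagonal $(1,a_n,a_n,\dots,a_n)$, hence of nonzero determinant $a_n^{n-1}$. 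Thus the $B_k$ are $\Q$-linearly independent and $R_f=\bigoplus_{k=0}^{n-1}\Z B_k$ has rank $n$.

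For closure I would first record the identities coming directly from the definitions of the $B_k$ together with the single relation $a_n\delta^n+a_{n-1}\delta^{n-1}+\dots+a_0=0$ expressing $f(\delta,1)=0$:
\begin{align*}
a_n\delta &= B_1-a_{n-1}B_0,\\
\delta B_k &= B_{k+1}-a_{n-k-1}B_0 \qquad (1\le k\le n-2),\\
\delta B_{n-1} &= -a_0B_0.
\end{align*}
Setting $\theta:=a_n\delta$, these show $\theta B_k\in R_f$ for all $k$, so multiplication by $\theta$ preserves $R_f$. Since $\theta$ satisfies the monic integral equation $\theta^n+a_{n-1}\theta^{n-1}+a_na_{n-2}\theta^{n-2}+\dots+a_n^{n-1}a_0=0$ (again just $f(\delta,1)=0$ cleared of denominators), the subring $\Z[\theta]$ is free of rank $n$, and because $1=B_0\in R_f$ we get $\Z[\theta]\subseteq R_f$; hence $R_f$ is a $\Z[\theta]$-module. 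As $B_0$ and $B_1=\theta+a_{n-1}$ already lie in $\Z[\theta]$, the module $R_f$ is generated over $\Z[\theta]$ by $B_0,B_2,B_3,\dots,B_{n-1}$, so $R_f\cdot R_f\subseteq R_f$ reduces to checking $B_iB_j\in R_f$ for $2\le i\le j\le n-1$.

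To check this I would expand $B_iB_j$ as a polynomial in $\delta$ of degree $i+j\le 2n-2$, reduce it modulo the relation $a_n\delta^n=-(a_{n-1}\delta^{n-1}+\dots+a_0)$ — most cleanly by passing to powers of $\theta$, using that $a_n^{k-1}B_k\in\Z[\theta]$ for each $k$ — and then re-express the result in the basis $(B_0,\dots,B_{n-1})$ via the triangular change of basis above, verifying that the resulting structure constants are integers. This last verification is the real content of the theorem and the one place where integrality could conceivably fail: multiplication by $\delta$ carries $\Z\langle B_1,\dots,B_{n-1}\rangle$ into $R_f$ but carries $B_0$ into $R_f$ only after multiplying by $a_n$, so an induction (on $i+j$, say) must be run with the strengthened hypothesis that every partial product, expressed in the basis $(B_k)$, has its $B_0$-coefficient divisible by $a_n$. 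Carrying this divisibility through the induction is exactly what keeps the reductions legal, and is the main obstacle; once it is in place the remaining computation is finite and mechanical. The cases $n=2,3$ already display the phenomenon — for instance $B_1^2=a_1B_1-a_0a_nB_0$ when $n=2$, and $B_1^2=a_3B_2+a_2B_1-a_1a_3B_0$, $B_1B_2=a_2B_2-a_0a_3B_0$, $B_2^2=a_1B_2-a_0B_1$ when $n=3$ — and no obstruction to integrality ever arises.
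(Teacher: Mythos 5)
Your rank computation and your reduction of the closure problem are both correct: the change of basis from $(1,\delta,\dots,\delta^{n-1})$ to $(B_0,\dots,B_{n-1})$ is triangular with determinant $a_n^{n-1}\neq 0$, the identities $\delta B_k=B_{k+1}-a_{n-k-1}B_0$ and $\delta B_{n-1}=-a_0B_0$ do give $\theta R_f\subseteq R_f$ for $\theta=a_n\delta$, and since $B_0,B_1\in\Z[\theta]$ the only products left to check are $B_iB_j$ with $2\le i\le j\le n-1$. The problem is that you stop exactly there. The integrality of those remaining structure constants \emph{is} the theorem, and your proposal replaces its proof with a description of a proof: you name an inductive invariant (the $B_0$-coefficient of each partial product is divisible by $a_n$), concede that verifying it is ``the main obstacle,'' and then assert the rest is ``finite and mechanical'' without ever showing the invariant propagates. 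A priori this is precisely the kind of divisibility that could fail, so as written this is a plan rather than a proof. (For comparison: the paper itself does not prove this theorem either --- it cites an external reference --- but its \cref{definingtableweakly} computes the products $B_{n-1}B_{n-i}$ by using $B_{n-1}=-a_0\delta^{-1}$, i.e.\ the reciprocal presentation of the high-index basis elements, which is the structural input your one-sided reduction modulo $a_n\delta^n=-(a_{n-1}\delta^{n-1}+\cdots+a_0)$ never invokes.)

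The gap is fillable along the lines you sketch, and here is the missing computation. Iterating $\delta B_k=B_{k+1}-a_{n-k-1}$ gives $\delta^mB_j=B_{j+m}-\sum_{t=1}^{m}a_{n-j-t}\delta^{m-t}$, with the conventions $B_k:=0$ for $k\ge n$ and $a_s:=0$ for $s<0$. Expanding $B_i=\sum_{p=0}^{i}a_{n-p}\delta^{i-p}$ against this, the leftover polynomial of degree $i-1$ in $\delta$ telescopes exactly (this is the divisibility you needed, and it holds with no remainder at every stage), yielding the explicit integral multiplication table
$$
B_iB_j=\sum_{p=0}^{i}a_{n-p}B_{i+j-p}-\sum_{t=1}^{i-1}a_{n-j-t}B_{i-t}-a_na_{n-j-i}B_0 ,
$$
which one checks reproduces your $n=2,3$ examples and the paper's \cref{definingtableweakly}. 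With that identity supplied, your argument is complete; without it, the statement you set out to prove has not been established.
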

\begin{definition}
    We define $I_f$ as the (fractional) ideal class generated by $(1,\delta)$ over $R_f,$ when $f$ is integral.
\end{definition} 
\begin{definition}
    When $f$ is integral, $R_f$ is known as the \textbf{binary ring} associated to the binary form $f.$
\end{definition}
These are a few properties of binary rings.
\begin{proposition}
     Properties of $R_f$ (when $f$ is integral):
    \begin{enumerate}
        \item \begin{equation}
            \Zdisc(R_f)=\disc(f).
        \end{equation} 
        \item If $\delta$ is invertible, and $f$ is primitive, then
        \begin{equation}
            R_f:=\Z[\delta] \cap\Z[\delta^{-1}].
        \end{equation}
        \item If $f$ is primitive, $I_f$ is invertible in $R_f.$
        \item Both $R_f$ and $I_f$ are invariant under the natural $GL_2(\Z)$ action on binary forms of degree $n.$ In particular, for $\delta\in \Qbar\backslash \Q$ this means that if $\lambda=\frac{a\delta+b}{c\delta+d}$ with $a,b,c,d\in \Z$ and $ad-bc=\pm1$ then
        $$\Z[\delta]\cap\Z[\delta^{-1}]=\Z[\lambda]\cap \Z[\lambda^{-1}].$$
    \end{enumerate}
\end{proposition}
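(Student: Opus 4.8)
I would dispatch the four items in the order: (1); the $GL_2(\Z)$-invariance in (4); (2); (3); and finally the ``in particular'' clause of (4). The substantive difficulty lies entirely in the local analysis at primes dividing $a_n$ that appears in (2) and (3). For item (1), expand each $B_k$ in the $\Q$-basis $1,\delta,\dots,\delta^{n-1}$ of $K=\Q[X]/(f(X,1))$: because $B_k=a_n\delta^k+a_{n-1}\delta^{k-1}+\cdots+a_{n-k}$, the change-of-basis matrix $M$ is lower triangular with diagonal $(1,a_n,\dots,a_n)$, so $\det M=a_n^{\,n-1}$, and therefore $\Zdisc(R_f)=(\det M)^2\det\big(\Tr{\delta^{i+j}}\big)_{0\le i,j\le n-1}$. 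Writing $\alpha_1,\dots,\alpha_n$ for the roots of $f(X,1)$ over $\Qbar$ with multiplicity, the power-sum identity $\Tr{\delta^m}=\sum_i\alpha_i^m$ gives $\big(\Tr{\delta^{i+j}}\big)=VV^{\mathrm t}$ for the Vandermonde $V=(\alpha_i^{\,j})$, so $\det\big(\Tr{\delta^{i+j}}\big)=\prod_{i<j}(\alpha_i-\alpha_j)^2$; thus $\Zdisc(R_f)=a_n^{\,2n-2}\prod_{i<j}(\alpha_i-\alpha_j)^2=\disc(f)$, and since this is a polynomial identity in the $a_i$ no separate treatment of $\disc(f)=0$ is needed.

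For the $GL_2(\Z)$-invariance in (4) it suffices to check $R_{\gamma f}=R_f$ and $I_{\gamma f}=I_f$ as $\gamma$ runs over generators of $GL_2(\Z)$ --- say the two unipotents $(X,Y)\mapsto(X+Y,Y)$ and $(X,Y)\mapsto(X,X+Y)$, the swap $(X,Y)\mapsto(Y,X)$, and the sign change $(X,Y)\mapsto(X,-Y)$ --- in each case writing the new canonical basis and the new generators $1,\delta'$ (with $\delta'=\delta^{-1}$ for the swap, $\delta'=\delta\mp1$ for the unipotents, $\delta'=-\delta$ for the sign change) explicitly in terms of $\delta$, and verifying by a finite linear computation that they generate the same $\Z$-module and the same $R_f$-module. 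Note $\gamma\in GL_2(\Z)$ preserves the content of a form, so $\gamma f$ is primitive whenever $f$ is. As for item (2): $R_f\subseteq\Z[\delta]$ is clear, and $R_f\subseteq\Z[\delta^{-1}]$ follows from $f(\delta,1)=0$ --- which yields $\delta B_{n-1}=-a_0$, hence $B_{n-1}=-a_0\delta^{-1}$ --- together with the recursion $B_{k-1}=\delta^{-1}(B_k-a_{n-k})$ and downward induction; this is where invertibility of $\delta$, i.e.\ $a_0\neq0$, is used, and primitivity has not entered yet.

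The reverse inclusion $\Z[\delta]\cap\Z[\delta^{-1}]\subseteq R_f$ is the heart of the proposition, and I would establish it prime-by-prime, i.e.\ by showing $R_f\otimes\Z_p=(\Z[\delta]\cap\Z[\delta^{-1}])\otimes\Z_p$ for each $p$ (using flatness of $\Z_p$ to commute the tensor with the intersection). If $p\nmid a_n$, then $a_n^{-1}f(X,1)\in\Z_p[X]$ is monic, so $\delta$ is integral over $\Z_p$, $\Z_p[\delta]=\Z_p\ncp{1,\delta,\dots,\delta^{n-1}}$, and $M\in GL_n(\Z_p)$, whence $R_f\otimes\Z_p=\Z_p[\delta]$; combined with $R_f\otimes\Z_p\subseteq\Z_p[\delta^{-1}]$ this gives equality. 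If $p\mid a_n$, primitivity forces $f\bmod p\neq0$; when $p\ge n$ the degree-$n$ form $f\bmod p$ cannot vanish on all of $\P^1(\F_p)$, so some primitive vector $(a,c)$ has $f(a,c)$ a $p$-unit and can be completed to $\gamma\in GL_2(\Z)$, reducing to the previous case via $R_f=R_{\gamma f}$. What remains --- and this is the main obstacle --- is the finitely many primes $p<n$ for which $f\bmod p$ may vanish identically on $\P^1(\F_p)$ (which forces $p\mid a_n$ and $p\mid a_0$ together); here the reduction above is unavailable, and one must argue directly, using that $f$ is still primitive over $\Z_p$ (so $\Z_p[\delta]=\Z_p[X]/(f(X,1))$ by Gauss's lemma, and $\Z_p[\delta]\cap\Z_p[\delta^{-1}]$ is finitely generated over $\Z_p$, sitting inside the maximal order) to pin down the intersection against the canonical basis. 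I regard this local argument as the technical core of the whole proposition.

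For item (3), invertibility of $I_f=(1,\delta)$ is again local: at $p\nmid a_n$ one has $\delta\in R_f\otimes\Z_p$, so $I_f\otimes\Z_p=R_f\otimes\Z_p$ is free of rank one, and at $p\mid a_n$ one repeats the reduction above using primitivity and $GL_2(\Z)$-invariance (alternatively, one shows $I_f\cdot\overline{I_f}$ is principal, where $\overline{I_f}$ is the ideal attached to the reversed form, the relevant generator becoming a $p$-unit after the same coordinate change). Finally, for the ``in particular'' clause: given $\delta\in\Qbar\setminus\Q$ and $\lambda=\frac{a\delta+b}{c\delta+d}$ with $ad-bc=\pm1$, take $f$ to be the primitive integral binary form of degree $n=[\Q(\delta):\Q]$ obtained by clearing denominators in and homogenizing the minimal polynomial of $\delta$; then $\lambda$ is a root of $\gamma f$ for the matching $\gamma\in GL_2(\Z)$, $\gamma f$ is again primitive, and applying item (2) to both $f$ and $\gamma f$ together with $R_f=R_{\gamma f}$ yields $\Z[\delta]\cap\Z[\delta^{-1}]=R_f=R_{\gamma f}=\Z[\lambda]\cap\Z[\lambda^{-1}]$.
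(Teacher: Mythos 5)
The paper itself does not prove this proposition --- it is quoted from \cite{MM-ONEFINE} and the text only points the reader there --- so your proposal has to be judged on its own terms, and on those terms it has one genuine gap. Items (1) and the inclusion $R_f\subseteq \Z[\delta]\cap\Z[\delta^{-1}]$ are complete and correct: the triangular change of basis with determinant $a_n^{n-1}$ together with $\det\big(\Tr{\delta^{i+j}}\big)=\prod_{i<j}(\alpha_i-\alpha_j)^2$ gives (1), and the recursion $B_{k-1}=\delta^{-1}(B_k-a_{n-k})$ starting from $B_{n-1}=-a_0\delta^{-1}$ gives the forward inclusion (this is the same identity the paper uses in \cref{definingtableweakly}). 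The localization strategy for the reverse inclusion and for (3) is also a legitimate and standard route, and works cleanly at every $p\nmid a_n$ and at every $p\mid a_n$ with $p\ge n$ via the $GL_2(\Z)$-invariance you set up first.

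The gap is exactly where you flag it: primes $p<n$ dividing $a_n$ at which $\bar f$ vanishes on all of $\P^1(\F_p)$. You state that there ``one must argue directly \ldots to pin down the intersection against the canonical basis'' and call this the technical core, but you do not supply the argument; a proof cannot end by naming its hardest step. Two standard ways to close it: (i) base-change to the Witt vectors $W(\F_q)$ of an unramified extension with $q+1>n$, where the degree-$n$ form cannot vanish on all of $\P^1(\F_q)$, so some primitive vector gives a unit value; apply the $GL_2(W(\F_q))$-version of your invariance computation (the same finite check, now over a local ring) and descend, since both $R_f\otimes W(\F_q)$ and $(\Z[\delta]\cap\Z[\delta^{-1}])\otimes W(\F_q)$ are obtained by flat base change from $\Z_p$; or (ii) verify membership directly against the full multiplication table of the canonical basis (the table alluded to in the paper's remark after the proposition), showing that any $x\in\Z_p[\delta]\cap\Z_p[\delta^{-1}]$, written in the basis $1,\delta,\dots,\delta^{n-1}$ with denominators, is forced into $\Z_p\ncp{B_0,\dots,B_{n-1}}$ by primitivity of $f$ over $\Z_p$. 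Either route also repairs item (3) at the same primes. Until one of these is carried out, the reverse inclusion in (2) and the invertibility in (3) are asserted rather than proved.
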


You can find the proof of all of these statements in \cite{MM-ONEFINE}. Some of them are previously known, but are only relevant to this paper for the sake of context.
\begin{remark}
    Note that one can write down the multiplication table for the defining basis of $R_f$ in terms of coefficients of $f$ explicitly. Using this table to define binary rings, one can give a definition for $R_f$ without the need for the condition ``$a_n\neq 0$".
\end{remark}
We will only need the following part of the multiplication table of the defining basis of $R_f$ ($\cref{basis-binary}$), which is easily verified.
\begin{lemma}\label{definingtableweakly}
Let $f(X,Y):= a_nX^n+a_{n-1}X^{n-1}Y +\cdots +a_0Y^n$ denote a binary form of degree $n,$ and let $\ncp{B_0,B_1,\cdots,B_{n-1}}$ denote the canonical basis for $R_f$ associated to $f$ as in \cref{basis-binary}.
Then, we have
    \begin{align}\label{multiplcation-Table-needed}
        B_{n-1}\cdot B_{n-i}= -a_0\cdot B_{n-i-1} +a_i\cdot B_{n-1}
    \end{align}
\end{lemma}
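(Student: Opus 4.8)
The plan is to verify the identity \eqref{multiplcation-Table-needed} by a direct computation inside the $\Q$-algebra $\Q[X]/(f(X,1))$, using only the relation $f(\delta,1)=0$, i.e. $a_n\delta^n = -(a_{n-1}\delta^{n-1}+\cdots+a_1\delta+a_0)$, and then checking that the resulting element of $\Q\ncp{1,\delta,\dots,\delta^{n-1}}$ indeed lies in $R_f$ and has the claimed expansion in the canonical basis. First I would record the two ingredients explicitly: on the one hand $B_{n-1}=a_n\delta^{n-1}+a_{n-1}\delta^{n-2}+\cdots+a_1 = \sum_{j=0}^{n-1} a_{n-1-j}\,\delta^{j}$ (with the convention that the $\delta^{n-1}$-coefficient is $a_n$), and on the other $B_{n-i}=\sum_{j=0}^{n-i} a_{n-i-j}\,\delta^{j}$ for $1\le i\le n-1$. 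I would also note the telescoping fact $\delta\cdot B_k = B_{k+1}-a_{n-k-1}$ for $0\le k\le n-2$, which follows immediately from the definition of the $B_k$'s; this is the ``shift'' relation that makes the canonical basis convenient.

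Next I would compute $B_{n-1}\cdot\delta^{m}$ for $0\le m\le n-1$ and reduce it modulo $f$. The key observation is that $B_{n-1}$ is, up to the top term, the ``reversed'' truncation of $f$: concretely $X\cdot f(X,1)/\!\sim$ relations give $\delta\cdot B_{n-1} = a_n\delta^{n} + a_{n-1}\delta^{n-1}+\cdots+a_1\delta$, and substituting $a_n\delta^n = -(a_{n-1}\delta^{n-1}+\cdots+a_0)$ yields $\delta\cdot B_{n-1} = -a_0$. That is the crucial simplification: multiplication by $\delta$ sends $B_{n-1}$ to the scalar $-a_0$. From here, $B_{n-1}\cdot B_{n-i}$ can be handled by writing $B_{n-i} = \delta\cdot B_{n-i-1} + a_{n-i}$ (for $1\le i\le n-2$) via the shift relation, so that
\[
B_{n-1}\cdot B_{n-i} = B_{n-1}\cdot(\delta\cdot B_{n-i-1}) + a_{n-i}\cdot B_{n-1} = (\delta\cdot B_{n-1})\cdot B_{n-i-1} + a_{n-i}\,B_{n-1} = -a_0\,B_{n-i-1} + a_{n-i}\,B_{n-1},
\]
which is exactly \eqref{multiplcation-Table-needed} after relabeling $i$. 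I would then separately dispatch the boundary case $i=n-1$ (i.e. $B_{n-1}\cdot B_1$), where $B_1 = a_n\delta + a_{n-1} = \delta\cdot B_0 \cdot a_n + a_{n-1}$; here one checks directly that $B_{n-1}\cdot B_1 = -a_0 B_0 + a_{n-1}B_{n-1}$ using $\delta\cdot B_{n-1}=-a_0$ and $B_0=1$, and the case $i=1$ where $B_{n-i-1}=B_{n-2}$ is still in range.

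The main obstacle I anticipate is purely bookkeeping: making sure the index ranges are consistent (the formula should be read for $1\le i\le n-1$, with $B_{n-i-1}$ interpreted as $B_0$ when $i=n-1$), that the shift relation $B_{n-i}=\delta B_{n-i-1}+a_{n-i}$ is applied only where both sides are defined, and that the single reduction step $a_n\delta^n\mapsto -(a_{n-1}\delta^{n-1}+\cdots+a_0)$ is the only place the relation $f(\delta,1)=0$ enters. There is no deep content — the lemma is ``easily verified'' as the statement says — so the write-up is just a matter of presenting the identity $\delta\cdot B_{n-1}=-a_0$ cleanly and then the two-line algebra above; the only care needed is to confirm that every intermediate expression already lies in $R_f$ (it does, since $B_{n-1}\cdot B_{n-i}$ is a product of elements of the ring $R_f$) so that expressing it in the canonical basis is legitimate.
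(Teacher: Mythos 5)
Your approach is essentially the paper's: the entire lemma rests on the single identity $\delta\cdot B_{n-1}=-a_0$ (which the paper phrases as $B_{n-1}=-a_0\delta^{-1}$) together with the shift structure of the canonical basis, and your two-line computation is the same computation, just written with multiplication by $\delta$ instead of division by it (a marginally cleaner choice, since it does not presuppose $a_0\neq 0$). One index needs fixing: since the constant term of $B_k$ is $a_{n-k}$, the shift relation instantiated at $B_{n-i}$ reads $B_{n-i}=\delta\cdot B_{n-i-1}+a_{i}$, not $+\,a_{n-i}$; with that correction your displayed line yields $-a_0B_{n-i-1}+a_iB_{n-1}$ on the nose, and no ``relabeling of $i$'' is needed (relabeling would also shift the index on $B_{n-i-1}$, so the display as written does not match the statement). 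You are right to single out the boundary case $i=n-1$, where $B_{n-i-1}=B_0=1$ rather than a polynomial with leading coefficient $a_n$: there the direct computation gives $B_{n-1}\cdot B_1=-a_0a_n+a_{n-1}B_{n-1}$, which agrees with the stated formula only when $a_n=1$. This discrepancy is latent in the paper's own proof as well (its step $\delta^{-1}B_{n-i}=B_{n-i-1}+a_i\delta^{-1}$ implicitly assumes $i\le n-2$), and it does not affect the downstream use of the lemma since the product is still an integral combination of the basis, but your write-up should state the correct constant rather than assert the lemma's version ``checks directly.''
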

\begin{proof}
    We make note of the fact, $B_{n-1}=-a_0\delta^{-1}.$ \\
    Thus,
   \begin{align*}
       &B_{n-1}\cdot B_{n-i}=-a_0\delta^{-1} \cdot (\sum_{j=0}^{n-i} a_{n-j}\delta^{j})\\
       &=-a_0\cdot \sum_{j=0}^{n-i} a_{n-j}\delta^{j-1}\\
       &=-a_0 \cdot (B_{n-i-1} +\frac{a_i}{\delta})\\
       &=-a_0\cdot B_{n-i-1} + a_i\cdot(\frac{-a_0}{\delta})\\
       &=-a_0\cdot B_{n-i-1} +a_i\cdot B_{n-1}.
   \end{align*}
\end{proof}
\begin{definition}
We say a binary form $f(x,y)\in \Z[x,y]$ of degree $n$ is weakly divisible by $m\in \Z_{>0}$ if there exists an $\ell \in \Z$
such that
\begin{align*}
    f(l,1) &\equiv 0\bmod{m^2}\\
    \frac{\partial f}{\partial x} (l,1) &\equiv 0\bmod m
\end{align*}
 When appropriate we say $f$ is weakly divisible by $m$ at $l$.
\end{definition}
\begin{remark}
    We use ``weakly divisible" as these polynomials are defined in \cite{bhargava2022squarefree} and \cite{BSW-monicsquarefree} and the main parts of these papers is about establishing an upper bound for the number of polynomials which are weakly divisible by $m$ and bounded height, for all large $m.$
\end{remark}
\begin{definition}\label{notation $f_l$}
    Given a binary form $f$ we set $f_l=f_l(x,y):=f(x+ly,y).$
\end{definition}
The condition that ``$f$ is weakly divisible by $m$ at $l$" is equivalent to having integers $a_0,a_1,\cdots,a_n$ such that \begin{equation*}
    f_l=a_nx^n+a_{n-1}x^{n-1}y+\cdots+a_{2}x^2y^{n-2} +ma_{1}xy^{n-1}+m^2a_0y^n.
\end{equation*}

For convenience sake, we define $\binom{n}{a}:=0$ if $n\ge 0$ and $a<0.$

\begin{remark}\label{l-matrix of tranformation}
We also mention a useful matrix of transformation for base change from the canonical basis of $R_f$ associated to $f(X,Y)$ to the canonical basis of $R_{f_l}$ associated to $f_l.$

 We let $f(X,Y)=A_0 X^n+A_1X^{n-1}Y+\cdots+A_nY^n$ and  $\ncp{B_0,B_1,\cdots,B_{n-1}}$ be the canonical basis of $R_f$ associated to $f(X,Y)$ and  $\ncp{C_0,C_1,\cdots,C_{n-1}}$ be the canonical basis of $R_{f_l}$ associated to $f_l,$ then we have
\begin{align*}
    \begin{matrix}
        \ncp{C_0,C_1,\cdots,C_{n-1}}= \\ \\ \\ \\ \\ \\ \\ \\ \\  
    \end{matrix}\begin{matrix}
        \ncp{B_0,B_1,\cdots,B_{n-1}}\\ \\ \\ \\ \\ \\ \\ \\ \\ 
    \end{matrix}\begin{bmatrix}
    1 & \binom{n-1}{1} l A_0 & \binom{n-1}{2} l^2 A_0 &\cdots & \binom{n-1}{k-1}l^{k-1}A_0&\cdots &\binom{n-1}{n-1}l^{n-1}A_0\\
    0 & 1 & \binom{n-2}{1}l & \cdots &\binom{n-2}{k-2}l^2&\cdots &\binom{n-2}{n-2}l^{n-2}\\ 
    0&0&1&\cdots &\binom{n-3}{k-3}l^{k-3}&\cdots&\binom{n-3}{n-3}l^{n-3}\\
    \vdots&\vdots &\vdots& \ddots&\vdots&\ddots &\vdots \\
    0 &0&0&\cdots& \binom{n-k'}{k-k'}l^{k-k'}&\cdots & \binom{n-k'}{n-k'}l^{n-k'}\\
    \vdots&\vdots &\vdots& \ddots&\vdots&\ddots &\vdots \\
    0 & 0 &0 &\cdots & \binom{1}{k-n+1}l^{k-n+1}&\cdots & l\\
    0 & 0 &0 &\cdots & \binom{0}{k-n}l^{k-n}&\cdots & 1
\end{bmatrix}
\end{align*}
The proof is given in the appendix.
\medskip\par

\end{remark}
Given $f$ a binary form, we let $\ncp{B_0,B_1,\cdots,B_{n-1}}$ be the canonical basis of $R_{f}$ associated to $f$ as in \cref{basis-binary}.  
\begin{definition}
    We define,
    \begin{equation*}
        R'_{(f,m)}:=\Z\ncp{B_0,B_1,\cdots,B_{n-2},\frac{B_{n-1}}{m}}.
    \end{equation*}
    
\end{definition}
\begin{theorem}\label{Weakly-Divisible-Defining Theorem}
If $f$ is weakly divisible by $m$ at $l,$ then $R'_{(f_l,m)}$ is a ring.
\end{theorem}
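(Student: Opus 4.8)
\emph{Proof proposal.} The plan is to verify directly that $R'_{(f_l,m)}$ is closed under multiplication; everything else comes for free, since $R'_{(f_l,m)}$ is by construction a $\Z$-module containing $1=B_0$, and — as $m\neq 0$ — its generators $B_0,\dots,B_{n-2},\tfrac{B_{n-1}}{m}$ form a $\Q$-basis of $\Q[x]/(f_l(x,1))$, so that $R'_{(f_l,m)}$ is automatically a subring of rank $n$ over $\Z$ once closure is known. Throughout, $\langle B_0,\dots,B_{n-1}\rangle$ denotes the canonical basis of $R_{f_l}$ attached to $f_l$ as in \cref{basis-binary} (note $f_l$ is integral since $f$ is and $l\in\Z$). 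First I would observe that $B_{n-1}=m\cdot\tfrac{B_{n-1}}{m}\in R'_{(f_l,m)}$, so $R_{f_l}\subseteq R'_{(f_l,m)}$; since $R_{f_l}$ is already a ring, every product $B_iB_j$ with $i,j\le n-2$ lies in $R_{f_l}\subseteq R'_{(f_l,m)}$. Hence the only products I need to analyze are $B_k\cdot\tfrac{B_{n-1}}{m}$ for $0\le k\le n-2$ and $\tfrac{B_{n-1}}{m}\cdot\tfrac{B_{n-1}}{m}$.

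The essential tool is \cref{definingtableweakly} applied to the form $f_l$. Because $f$ is weakly divisible by $m$ at $l$, we may write $f_l=a_nx^n+a_{n-1}x^{n-1}y+\dots+a_2x^2y^{n-2}+ma_1xy^{n-1}+m^2a_0y^n$ with all $a_i\in\Z$, so the coefficient of $x^jy^{n-j}$ in $f_l$ is an integer, divisible by $m$ when $j=1$ and by $m^2$ when $j=0$. Feeding these coefficients into \cref{definingtableweakly} gives, for $1\le i\le n-1$, the identity $B_{n-1}\cdot B_{n-i}=-m^2a_0\,B_{n-i-1}+c_i\,B_{n-1}$, where $c_i=a_i$ for $i\ge 2$ and $c_1=ma_1$; alongside this I would record the trivial relation $B_{n-1}\cdot B_0=B_{n-1}$.

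From here the divisibilities fall out. For $1\le k\le n-2$, taking $i=n-k\ge 2$ yields $B_k\cdot\tfrac{B_{n-1}}{m}=-ma_0\,B_{k-1}+a_{n-k}\cdot\tfrac{B_{n-1}}{m}\in R'_{(f_l,m)}$, while for $k=0$ we simply get $B_0\cdot\tfrac{B_{n-1}}{m}=\tfrac{B_{n-1}}{m}$. Finally, taking $i=1$ gives $B_{n-1}^2=-m^2a_0\,B_{n-2}+ma_1\,B_{n-1}$, hence $\tfrac{B_{n-1}}{m}\cdot\tfrac{B_{n-1}}{m}=-a_0\,B_{n-2}+a_1\cdot\tfrac{B_{n-1}}{m}\in R'_{(f_l,m)}$. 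This last computation is the only place the hypothesis is genuinely used: the factor $m^2\mid f_l(0,1)$ clears the denominator $m^2$, and the factor $m\mid \partial_x f_l(0,1)$ makes the remaining term a legitimate multiple of $\tfrac{B_{n-1}}{m}$ rather than an illegal $\tfrac1m B_{n-1}$. Since multiplication is $\Z$-bilinear and every product of the $\Z$-module generators of $R'_{(f_l,m)}$ lands back in $R'_{(f_l,m)}$, the proof is complete.

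I do not expect a real obstacle here: once \cref{definingtableweakly} is available the argument is a short bookkeeping calculation, and the weak divisibility condition is precisely tailored to make $B_{n-1}^2/m^2$ integral over the enlarged basis. The only points requiring a little care are the index conventions in \cref{definingtableweakly}, the degenerate case $k=0$ (handled trivially above), and confirming that \cref{definingtableweakly} applies verbatim to $f_l$ — its proof uses only the identity $B_{n-1}=-f_l(0,1)\,\delta^{-1}$, which is valid because the leading coefficient of $f_l$ equals that of $f$ and is therefore nonzero.
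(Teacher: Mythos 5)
Your proposal is correct and follows essentially the same route as the paper's own proof: reduce to the products involving $\tfrac{B_{n-1}}{m}$, apply \cref{definingtableweakly} to $f_l$ with the coefficient of $y^n$ equal to $m^2a_0$ and that of $xy^{n-1}$ equal to $ma_1$, and observe that the resulting relations have integral coefficients in the new basis. Your treatment of the index bookkeeping and the degenerate product $B_0\cdot\tfrac{B_{n-1}}{m}$ is slightly more explicit than the paper's, but the argument is the same.
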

\begin{proof}
    Let $\ncp{B_0,B_1,\cdots,B_{n-1}}$ denote the canonical (old) basis of $R_{f_l}$ associated to $f_l.$ Since, $f$ is weakly divisible by $m$ at $l,$ we can write 
    $$
    f_l=a_nx^n+a_{n-1}x^{n-1}y+\cdots+a_{2}x^2y^{n-2} +ma_{1}xy^{n-1}+m^2a_0y^n,
    $$ 
    where $a_i,m\in \Z.$
    
    We will show that $R'_{(f_l,m)}$ is a ring, by showing that product of any two elements in the (new) basis given by $\ncp{B_0,B_1,\cdots,B_{n-2},\frac{B_{n-1}}{m}}$ (basis for $R'_{(f_l,m)})$ is in the $\Z$-span of itself. We will achieve this by comparing the product of every two elements in the old basis with the product of the corresponding two elements in the new basis.)
    
    We note that $B_i \cdot B_j$ for $i,j\neq n-1$ can be written as a $\Z$-linear combination of $\ncp{B_i}$. This follows directly from the fact that the $\Z$-span of $\ncp{B_i}$ forms a ring (the prodigal binary ring) (we can also refer to the multiplication tables given in \cite{Gaurav} or section 2.1 in \cite{MM-ONEFINE}). 
    
    It immediately follows that $B_i \cdot B_j$ for $i,j\neq n-1$ can also be written as a $\Z$-linear combination of our new basis $\ncp{B_0,B_1,\cdots,B_{n-2},\frac{B_{n-1}}{m}}$ (which only differs from the original basis at the index $n-1$) by simply replacing $B_{n-1}$ with $m\cdot \frac{B_{n-1}}{m}$ in the multiplication table of the old basis.
    
    On the other hand, \cref{definingtableweakly} immediately tells us that, when $i\neq 1$
    \begin{align*}
        \frac{B_{n-1}}{m}\cdot B_{n-i} = -ma_0\cdot B_{n-i-1}  +a_i\cdot \frac{B_{n-1}}{m} \\
    \end{align*} 
    and
    \begin{align*}
        \frac{B_{n-1}}{m}\cdot \frac{B_{n-1}}{m} = -a_0\cdot B_{n-2} +a_1\cdot \frac{B_{n-1}}{m}.
    \end{align*}
    Thus, the products of elements in this new basis are $\Z$-linear combinations of the same new basis. It follows that $R'_{(f_l,m)}$ is in-fact a ring.
\end{proof}
\begin{definition}
    We say $R'_{(f_l,m)}$ is the weakly divisible ring (at $l$ with respect to $m$) associated to $f$, when $f$ is weakly divisible by $m$ at $l$. When appropriate we will also represent this ring as $R'_{(f,m,l)}.$
\end{definition}
\begin{remark}
    Every binary ring is a weakly divisible ring at every value with respect to 1.
\end{remark}
\begin{remark}
   We note that weakly divisible rings may also defined by multiplication tables to avoid dependence on a condition like ``$a_n\neq 0$ or $a_0\neq 0$".\end{remark}





\section{\bf Effective injectivity of the map: $$(f,m,l)\longrightarrow R'_{(f_l,m)}$$}

\bigskip
One of the issues we must resolve to count rings and number fields via the above correspondence is the issue of distinct polynomials offering up the same rings or number fields.
We resolve this using an approach similar to the one in \cite{BSW-monicsquarefree}.
\begin{remark}
    The result we get here is less than ideal and improvement on characterization of subsets on which this map will be injective or even some characterizations of subsets on which the map $f\longrightarrow R_f$ is injective, will further improve the lower bound. In a future paper, we will write down a conjecture qualifying what we expect to happen in ideal circumstances in such a situation.
\end{remark}
Before we dive into the details, we observe the following lemma which talks about injectivity for a fixed $m$ and $f.$
 \begin{lemma}\label{same-m}
     If $f$ is weakly divisible by $m$ at $a$ and $b$ then 
     $$
     R'_{(f_a,m)}= R'_{(f_b,m)}\iff a\equiv b\bmod m.
     $$
 \end{lemma}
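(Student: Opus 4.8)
The plan is to carry out the whole comparison inside a single lattice. Write $t := b-a$; then $f_b = (f_a)_t$ in the sense of \cref{notation $f_l$}, and since the change-of-basis matrix of \cref{l-matrix of tranformation} (applied to $f_a$ with its parameter $l$ set to $t$) is a unimodular integer matrix, $R_{f_b}=R_{f_a}=:R$ (this is also just the $GL_2(\Z)$-invariance of binary rings). So the canonical basis $\ncp{B_0,\dots,B_{n-1}}$ attached to $f_a$ and the canonical basis $\ncp{C_0,\dots,C_{n-1}}$ attached to $f_b$ are two $\Z$-bases of the same lattice $R$, and I will assume $n\ge 3$, which is the range relevant here.

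The one substantial input is the expression of $C_{n-1}$ in the basis $\ncp{B_0,\dots,B_{n-1}}$. The matrix of \cref{l-matrix of tranformation} is unipotent upper-triangular, and its last column gives
\[
C_{n-1} = B_{n-1} + t\,B_{n-2} + t^2\,B_{n-3} + \cdots + t^{n-2}B_1 + a_n\,t^{n-1}B_0,
\]
where $a_n$ is the leading coefficient of $f$. (Alternatively, from the identity $B_{n-1}=-a_0\,\delta^{-1}$ recorded in the proof of \cref{definingtableweakly} one has $C_{n-1}=-f(b,1)\,\delta_b^{-1}$ with $\delta_b=\delta_a-t$, and expanding $(\delta_a-t)^{-1}$ as a polynomial in $\delta_a$ and regrouping into the $B_i$ yields the same formula.) For the argument I only need two consequences: the $B_{n-2}$-coordinate of $C_{n-1}$ is \emph{exactly} $t$, and all of its $B_0,\dots,B_{n-2}$-coordinates are divisible by $t$.

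Next I rephrase the two rings over $R$. By construction $R'_{(f_a,m)} = R+\Z\cdot\tfrac{B_{n-1}}{m}$ and $R'_{(f_b,m)} = R+\Z\cdot\tfrac{C_{n-1}}{m}$; since $B_{n-1}$ and $C_{n-1}$ are primitive vectors of $R$, each of these is a $\Z$-module containing $R$ with index exactly $m$ and contained in $\tfrac1mR$. Hence $R'_{(f_a,m)}=R'_{(f_b,m)}$ if and only if $\tfrac{C_{n-1}}{m}\in R'_{(f_a,m)}$, i.e. if and only if $C_{n-1}\equiv k\,B_{n-1}\pmod{mR}$ for some $k\in\Z$. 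I then finish both implications with the coordinate identity. If $a\equiv b\pmod m$ then $m\mid t$, so all the $B_0,\dots,B_{n-2}$-coordinates of $C_{n-1}$ are divisible by $m$, giving $C_{n-1}\equiv B_{n-1}\pmod{mR}$ and hence equality of the rings. Conversely, if the rings are equal, choose $k$ with $C_{n-1}\equiv k\,B_{n-1}\pmod{mR}$ and compare coordinates with respect to the $\Z$-basis $\ncp{B_0,\dots,B_{n-1}}$ modulo $m$: the $B_{n-2}$-coordinate of $C_{n-1}$ is $t$ and that of $k\,B_{n-1}$ is $0$, so $m\mid t$, i.e. $a\equiv b\pmod m$.

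The main obstacle is the coordinate identity of the second paragraph — concretely, verifying that the coefficient of $B_{n-2}$ in $C_{n-1}$ is exactly $b-a$ (after which the lower-index coordinates are automatically multiples of it). Granted \cref{l-matrix of tranformation} this is a one-line extraction; otherwise it is the only genuine computation, and everything around it is routine bookkeeping with index-$m$ overlattices of $R$. Note also that the weak-divisibility hypothesis enters only to ensure, via \cref{Weakly-Divisible-Defining Theorem}, that $R'_{(f_a,m)}$ and $R'_{(f_b,m)}$ are rings at all; the stated equivalence is purely a statement about these lattices.
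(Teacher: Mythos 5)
Your proof is correct and takes essentially the same route as the paper: both arguments rest on the explicit change-of-basis matrix of \cref{l-matrix of tranformation}, whose $(n-1,n)$-entry equal to the shift $t=b-a$ (with all lower entries of that column divisible by $t$) is exactly what forces, and is forced by, integrality. Your repackaging of the comparison as the single congruence $C_{n-1}\equiv k\,B_{n-1}\pmod{mR}$ inside the common lattice $R=R_{f_a}=R_{f_b}$ is a slightly tidier bookkeeping of the same computation that the paper carries out with full basis-change matrices, and your explicit restriction to $n\ge 3$ makes precise a hypothesis the paper leaves implicit.
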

 \begin{proof}
 Suppose $R'_{(f_a,m)}= R'_{(f_b,m)}$.
 Let the canonical basis for $R_f,$ the binary ring associated to $f$, be denoted by $$
 \ncp{1,B_1,B_2,\cdots,B_{n-2},B_{n-1}}.
 $$
 Then, we observe that
 $$
 A:=\ncp{1,B_1,B_2,\cdots, B_{n-2}, \frac{B_{n-1}+aB_{n-2} + r_1 B_{n-3}+\cdots+r_{n-2}}{m}}
 $$
 is a basis for $R'_{(f_a,m)}$ and 
 $$
 B:=\ncp{1,B_1,B_2,\cdots, B_{n-2}, \frac{B_{n-1}+bB_{n-2} + s_1 B_{n-3}+\cdots+s_{n-2}}{m}}
 $$
 is a basis for $R'_{(f_b,m)}$ where $r_i,s_i\in \Z.$ 
 These claims follow directly from Remark \cref{l-matrix of tranformation} as the canonical basis for $R_f$ (associated to $f$) and that for $R_{f_l}$ (associated to $f_l)$ is an upper triangular (integer and unipotent) matrix away from one another with the $1$s along the diagonal and the entry at $(n-1,n)$ is $l$. 
In other words, the entry in the matrix of basis change(from $A$ to $B$) has the entry $\frac{a-b}{m}$ at the $(n-1,n)$ place.

Thus, if $R'_{(f_a,m)}=R'_{(f_b,m)}$, then the matrix of transformation for associated bases has to be an invertible integer matrix, that is, $\frac{a-b}{m}\in \Z$ or $a\equiv b \bmod m.$

To see the ``if" part we again note that the exact matrix for the canonical basis transformation 
$$
R_f\longrightarrow R_{f(x+l)} 
$$ 
is given by an upper triangular matrix whose entries beyond the first row are all independent of the coefficients of $f$, see \cref{l-matrix of tranformation}. Clearly, the matrix transformation  
$$
R_f\longrightarrow R_{f(x+m)}
$$ 
is congruent to the identity 
modulo $m$. Now, if $a \equiv b \bmod m$ then $(f_a)_b=f_a(x+mk)$. We thus see that the change of base matrix from $R_{f_a}$ to $R_{f_b}$ is identity modulo $m.$ Thus, dividing the last column of this matrix by $m$ and multiplying the last row of this matrix by $m$ still gives us an invertible integer matrix. This new matrix will be the matrix of change of basis for 
$$
R'_{(f_a,m)}\longrightarrow R'_{(f_b,m)}.
$$  
The result follows. 
 \end{proof}
\subsection{Fundamentals of Minkowski Reduction}
Let $\ncp{v_1,v_2,\cdots,v_n}$ denote a real basis for $\R^n.$
We perform a Gram-Schmidt reduction on the basis using some canonical distance form. We write 
$$\ncp{v_1,v_2,\cdots,v_n}= \ncp{B}M [(t_1,t_2,...,t_n)]$$ where $B$ is an orthonormal ordered basis of vectors, all of which are of the same size. $M$ is an upper triangular uni-potent matrix. And $[(t_1,t_2,..,t_n)]$ denotes a diagonal matrix with $t_i$ placed in the $(i,i)^{th}$ place.

Let $v_i'$ denote the projection of $v_i$ to the space orthogonal to the space spanned by $\{v_1,v_2,\cdots, v_{i-1}\}$. Then, Gram-Schmidt process forces  
\begin{equation}\label{t_i}
    |t_i| =||v_i'||.
\end{equation}
We note that for each $1 \le i \le n$, and any choice of coefficients $a_1, \cdots, a_i$ we have
$$
||a_iv_i + \sum_{j=1}^{i-1}a_jv_j||\ \ge\ |a_i| |t_i|.
$$
This follows by looking at the component of the given vector (on RHS) along $v'_i.$ 
Moreover, for each $1 \le i \le n$, there exist $b_1, \cdots, b_{i-1}\in \Z$ so that
$$
||v_i + \sum_{j=1}^{i-1}b_jv_j||\le |t_i| +|t_{i-1}|+\cdots+|t_1|.
$$

One can simply choose $b_j$ such that the $\sum_{j=1}^{i-1}b_jv_j$ approximates the vector $v_i-v'_i$ which clearly lies in the space spanned by $\ncp{v_1,v_2,\cdots,v_{i-1}}.$

Now if $t_{i+1}/{t_i}>2$ for all $i$, one can see that inductively the $i^{th}$ vector in the Minkowski reduced basis of the lattice spanned by $\{v_1,v_2\cdots, v_n\}$ will have the form $\pm v_i +\sum_{j=1}^{i-1}b_jv_j.$

In fact, having
$$
t_{i}/{t_{i-1}}>\sqrt{1+\frac{1}{i}} \textit{ for all } 1\le i\le n
$$ 
is sufficient to conclude this.

Furthermore, if we just know that,
\begin{equation}
    \frac{t_i}{t_2}\ge 2 \textit{ for all } 3\le i\le n \textit{ and  } \frac{t_2}{t_1}\ge 2
\end{equation}
then, the first two vectors (which will be unique up to sign) in the Minkowski reduced basis for the lattice $\ncp{v_1,v_2,\cdots,v_n}$ will have the above form. 
That is 
$$
\{\pm v_1 , \pm (v_2 +a\cdot v_1)\}
$$
will be the first two elements in the Minkowski reduced basis for this lattice spanned by $\ncp{v_1,v_2,\cdots,v_n}.$

\begin{definition}
    If $\ncp{v_1,v_2,\cdots,v_n}$ is a basis for $\R^n$ and $v_i'$ denotes the projection of $v_i$ to the space orthogonal to the space spanned by $\{v_1,v_2,\cdots, v_{i-1}\}$ and $t_i=||v_i'||,$ then we say   $\ncp{v_1,v_2,\cdots,v_n}$ is {\em Normally Minkowski Reduced} if $t_i$ satisfy 
    \begin{equation}\label{first 2 minkow}
    \frac{t_i}{t_2}\ge 2 \textit{ for all } 3\le i\le n \textit{ and  } \frac{t_2}{t_1}\ge 2
\end{equation}
\end{definition}
Our discussion above gives us the following lemma:
\begin{lemma}
    If $\ncp{B_1,B_2,\cdots,B_n}$ is {\em Normally Minkowski Reduced}, then
    \begin{enumerate}
        \item $B_1$ is the unique smallest vector in $L$ up to sign.
        \item $\exists a\in \Z$ such that, $B_2+a\cdot B_1$ is the smallest vector in $L$ which is not in $\Z\cdot B_1$ up to sign.
    \end{enumerate}
\end{lemma}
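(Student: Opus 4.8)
The plan is to reduce both parts to the two inequalities already recorded above: the lower bound $\|a_iv_i+\sum_{j<i}a_jv_j\|\ge |a_i|\,t_i$ coming from the component along $v_i'$, and the approximation bound $\|v_i+\sum_{j<i}b_jv_j\|\le t_i+\cdots+t_1$ for a suitable choice of $b_j\in\Z$. We apply these with $v_i=B_i$. Since the orthogonal complement of the span of the empty set is all of $\R^n$, we have $v_1'=v_1=B_1$, hence $t_1=\|B_1\|$. The hypothesis that $\ncp{B_1,\dots,B_n}$ is Normally Minkowski Reduced, namely $t_2/t_1\ge2$ and $t_i/t_2\ge2$ for $3\le i\le n$, then gives $t_k\ge 2t_1$ for every $k\ge2$ (immediately for $k=2$, and $t_k\ge2t_2\ge4t_1$ for $k\ge3$).

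First I would prove (1). Write an arbitrary nonzero $w\in L$ as $w=\sum_{i=1}^nc_iB_i$ with $c_i\in\Z$ and let $k$ be the largest index with $c_k\ne0$. If $k\ge2$, then $\|w\|\ge|c_k|\,t_k\ge t_k\ge 2t_1=2\|B_1\|>\|B_1\|$. If $k=1$, then $w=c_1B_1$, so $\|w\|=|c_1|\,\|B_1\|\ge\|B_1\|$ with equality exactly when $c_1=\pm1$. Hence $\pm B_1$ are the only shortest nonzero vectors of $L$, which is (1).

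Next I would prove (2). Let $\mu:=\min\{\|w\|: w\in L,\ w\notin\Z B_1\}$; this minimum is attained because $L\setminus\Z B_1$ is a nonempty closed discrete subset of $\R^n$ on which $\|\cdot\|$ is proper. The decisive point is to produce a cheap competitor: applying the approximation bound with $i=2$ gives $b_1\in\Z$ with $\|B_2+b_1B_1\|\le t_2+t_1\le\tfrac32 t_2<2t_2$, and since the $B_2$-coefficient is $1$ this vector lies in $L\setminus\Z B_1$; therefore $\mu<2t_2$. Now let $w=\sum c_iB_i$ achieve $\mu$ and let $k\ge2$ be its top index. If $k\ge3$ then $\|w\|\ge|c_k|\,t_k\ge t_k\ge2t_2>\mu$, a contradiction, so $k=2$ and $w=c_1B_1+c_2B_2$ with $c_2\ne0$. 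If $|c_2|\ge2$ then $\|w\|\ge|c_2|\,t_2\ge2t_2>\mu$, again impossible; thus $c_2=\pm1$, and after replacing $w$ by $-w$ if necessary we may assume $c_2=1$, so $w=B_2+aB_1$ with $a:=c_1$. This is the vector claimed in (2); the same argument shows every norm-minimizer not in $\Z B_1$ has the form $\pm(B_2+a'B_1)$, and minimizing $\|B_2+a'B_1\|$ over $a'\in\Z$ determines $a$ up to sign (with the usual tie when the real minimizer is a half-integer).

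I do not expect a serious obstacle; the only thing to be careful about is the constant bookkeeping in (2), where one needs the diagonal competitor $B_2+b_1B_1$ to be genuinely shorter than every lattice vector that involves $B_3,\dots,B_n$. This is exactly the role of the factor $2$ in the definition of Normally Minkowski Reduced: it forces $t_1+t_2\le\tfrac32 t_2$ to sit strictly below $t_k\ge2t_2$. If instead one used the sharper ratio condition $t_i/t_{i-1}>\sqrt{1+1/i}$ mentioned above, the same strategy applies but one tracks the Gram--Schmidt ratios inductively as in the displayed remark; for the statement as given the crude factor-$2$ version is enough.
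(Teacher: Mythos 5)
Your proof is correct and follows essentially the same route as the paper, which derives the lemma from the preceding discussion: the lower bound $\|a_iv_i+\sum_{j<i}a_jv_j\|\ge |a_i|\,t_i$ applied to the top nonzero coefficient, together with the integral approximation bound $\|v_i+\sum_{j<i}b_jv_j\|\le t_i+\cdots+t_1$ to produce the short competitor $B_2+b_1B_1$. Your write-up merely makes explicit the bookkeeping ($t_1+t_2\le\tfrac32 t_2<2t_2\le t_k$) that the paper leaves implicit, so there is nothing to correct.
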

\subsection{Reduced-m-Polynomials}
\begin{definition}
    We say a tuple $(f,m,l)$ is a Reduced-$m$-Polynomial (at $l$) if
    \begin{itemize}     
        \item The canonical basis for $R'_{(f_l,m)}$ is Normally Minkowski Reduced when seen in $R'_{(f_l,m)}\otimes \R\simeq \R^r\oplus\C^s$ under the canonical norm.
    \end{itemize}
\end{definition}
\begin{remark}
     We will ignore the $l$ part as $(f,m,l)$ is a Reduced-$m$-polynomial (at $l$) $\iff$ $(f,m,r)$ is a Reduced-$m$-Polynomial (at $r$) for any real value $r.$ This is easy to see from the fact that the matrix in \cref{l-matrix of tranformation} is upper triangular and hence will leave relative sizes of normal components unchanged. 
\end{remark}

\begin{theorem}\label{distinct-rings}
    Given $f,g$ polynomials of degree $n\ge 4$ such that $f$ is a Reduced-$m$-polynomial and $f$ is weakly divisible by $m$ at $e$ and $g$ is a Reduced-$m'$-polynomial and $g$ is weakly divisible by $m'$ at $d$ ($m,m'\ge 1$) with $R'_{(f_e,m)}= R'_{(g_d,m)}$ then 
    $$
    m=m' \textit{ and }\exists r\in \Z: g(x)=f(x+mr-d+e).
    $$ 
\end{theorem}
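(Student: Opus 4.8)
The plan is to adapt the Minkowski-reduction argument of \cite{BSW-monicsquarefree}: use the ``Normally Minkowski Reduced'' hypothesis to rigidify the bottom of the two canonical bases, convert this into an affine relation between the two ring generators, upgrade that relation to an honest integral translation by exploiting the normal form of weakly divisible forms, and finish with \cref{same-m}.

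First I would set $R := R'_{(f_e,m)} = R'_{(g_d,m')}$ with fraction field $K$, and write $\ncp{B_0,\dots,B_{n-2},B_{n-1}/m}$ and $\ncp{C_0,\dots,C_{n-2},C_{n-1}/m'}$ for its two canonical bases, coming from $f_e$ and from $g_d$; by hypothesis both are Normally Minkowski Reduced in $R\otimes\R$ under the canonical norm. Since $B_0=C_0=1$ is then the unique shortest vector up to sign, and the second vector of a Normally Minkowski Reduced basis is determined up to sign and up to adding an integer multiple of the first, there are $a,a'\in\Z$ and a sign $\varepsilon\in\{\pm1\}$ with $B_1+aB_0=\varepsilon(C_1+a'C_0)$, so $B_1=\varepsilon C_1+k$ for some $k\in\Z$. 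Now $B_1=A\delta+(\text{integer})$, where $A$ is the leading coefficient of $f$ (equivalently of $f_e$) and $\delta$ is the image of $x$ in $\Q[x]/(f_e(x,1))$, and likewise $C_1=A'\gamma+(\text{integer})$ with $A'$ the leading coefficient of $g$ and $\gamma$ the image of $x$ in $\Q[x]/(g_d(x,1))$. As $\delta$ and $\gamma$ both generate $K$, after identifying fraction fields the relation $B_1=\varepsilon C_1+k$ becomes
$$
A\,\delta \;=\; \varepsilon\,A'\,\gamma+c,\qquad c\in\Z .
$$

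The heart of the proof is to promote this to an integral translation, i.e. to show $\varepsilon=+1$, $A=A'$, and hence $\gamma=\delta-t$ for some $t\in\Z$. I would first rule out $\varepsilon=-1$: the reflection $x\mapsto-x$ fixes the binary ring and preserves the weakly divisible shape, so it has to be excluded by the orientation convention built into the notion of a Reduced-$m$-polynomial. To force $A=A'$ (the genuinely non-monic point) I would play the two canonical descriptions of the single lattice $R$ off one another: $R_{f_e}$ sits inside $R$ as the index-$m$ sublattice on which the last canonical coordinate is divisible by $m$ (similarly $R_{g_d}$ and $m'$), and comparing the two upper-triangular unipotent transition matrices of \cref{l-matrix of tranformation}, together with the weakly divisible normal form $Ax^n+\dots+A_2x^2y^{n-2}+mA_1xy^{n-1}+m^2A_0y^n$ of $f_e$ and $g_d$ and the $(n-1,n)$-entry bookkeeping already carried out in \cref{same-m}, should pin $A=A'$ and force $c\in\Z$. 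Since $\delta$ has minimal polynomial $A^{-1}f_e(x,1)$ and $\gamma$ has minimal polynomial $A'^{-1}g_d(x,1)$, the relation $\gamma=\delta-t$ then becomes the polynomial identity $g_d(x,1)=f_e(x+t,1)$, that is $g_d=(f_e)_t$. I expect this to be the main obstacle: the surrounding steps are bookkeeping on top of the quoted lemmas, whereas here one genuinely uses the arithmetic of the forms, and it is the step most sensitive to the precise definition of Reduced-$m$-polynomial and to the assumption $n\ge4$.

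The remainder is short. From $g_d=(f_e)_t$ with $t\in\Z$, translation invariance of the discriminant gives $\disc(g)=\disc(g_d)=\disc(f_e)=\disc(f)$, while Property~(1) of binary rings together with the index-squared behaviour of the discriminant gives $\Zdisc(R)=\disc(f)/m^2=\disc(g)/m'^2$; hence $m^2=m'^2$, and as $m,m'\ge1$, $m=m'$. Moreover $f_e$ is weakly divisible by $m$ at $0$ (this is the hypothesis that $f$ is weakly divisible by $m$ at $e$) and also at $t$, because $(f_e)_t=g_d$ is in the $m$-weakly divisible normal form; and $R'_{((f_e)_0,m)}=R=R'_{((f_e)_t,m)}$. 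Hence \cref{same-m} gives $t\equiv0\pmod m$, say $t=mr$ with $r\in\Z$. Therefore $g(x)=g_d(x-d)=f_e(x-d+t)=f(x+e-d+mr)$, which is the claimed identity $g(x)=f(x+mr-d+e)$.
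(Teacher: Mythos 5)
Your overall architecture matches the paper's: pin down the first two vectors of the common lattice via the Normally-Minkowski-Reduced hypothesis, deduce an affine relation $A\delta=\varepsilon A'\gamma+c$, upgrade it to an integral translation, get $m=m'$ from discriminants, and finish via \cref{same-m} (the paper instead reads off the $(n-1,n)$-entry $\frac{d-e+l}{m}$ of the transition matrix directly, but the two finishes are equivalent). However, the step you yourself single out as ``the main obstacle'' --- forcing $A=A'$ and $c\in\Z$ --- is exactly the step you do not prove, and the route you sketch for it does not work as stated: the matrices of \cref{l-matrix of tranformation} compare the canonical bases of $R_f$ and $R_{f_l}$ for \emph{translates of one form}, so invoking them here presupposes that $g$ is already known to be an integral translate of $f$, which is what is being proved. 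The paper's actual mechanism uses the \emph{third} canonical basis vector: since $n\ge 4$, $B_2=a_0\beta^2+a_1\beta+a_2$ and $C_2=b_0\alpha^2+b_1\alpha+b_2$ are both genuine (undivided) members of bases of the same lattice $R$, and $B_1^2\equiv a_0B_2$, $C_1^2\equiv b_0C_2$ modulo the rank-two sublattice $\Z+\Z B_1=\Z+\Z C_1$; equivalently, the $3\times 3$ transfer matrix from $\ncp{1,a_0\beta,a_0\beta^2+a_1\beta}$ to $\ncp{1,b_0\alpha,b_0\alpha^2+b_1\alpha}$ is upper triangular with $(3,3)$-entry $a_0/b_0$ and must lie in $GL_3(\Z)$. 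This forces $a_0=b_0$ (hence $\disc f=\disc g$ and then $m=m'$) and integrality of the off-diagonal entries forces the translation to be integral. Without some such argument your proof has a hole at its center.

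A secondary issue: you dismiss $\varepsilon=-1$ by appeal to an ``orientation convention built into the notion of a Reduced-$m$-polynomial,'' but no such convention exists --- being Normally Minkowski Reduced is invariant under $x\mapsto -x$. The paper does not rule the sign out either; it absorbs it by replacing $g(x)$ with $(-1)^ng(-x)$ (which preserves positivity of the leading coefficient and the weakly divisible shape), so the conclusion is really obtained up to this reflection. You should either do the same or point to where the reflection symmetry is actually broken downstream (in Section 5 it is the normalization $0\le a_1$ in $W(s:t)$), rather than attributing it to the reduction condition.
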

\begin{proof}
    Let $\beta$ and $\alpha$ denote roots of $f$ and $g$ respectively ($\in\R^r\oplus\C^s$). Let $a_0$ and $b_0$ denote the positive leading coefficients of $f$ and $g$ and $a_1$ and $b_1$ denote the second leading coefficient of $f$ and $g$ respectively. 
    
    We know that the smallest two elements of any Normally Minkowski Reduced basis are unique. Since $R'_{(f_e,m)}=R'_{(g_d,m')},$ comparing the smallest two elements elements in these, we get
    $$
    \ncp{1,a_0\cdot\beta}\begin{bmatrix} a & b\\0 & c \end{bmatrix} =\ncp{1,b_0\cdot \alpha}
    $$
    where $a=\pm 1$ and $c=\pm1.$ Clearly, $a=1$ and thus it follows that
    $$
    \alpha=\frac{\pm a_0 }{b_0}\cdot \beta +\frac{b}{b_0}.
    $$
    $$
    \frac{f(\frac{\pm a_0 }{b_0}\cdot x +\frac{b}{b_0})}{a_0}=(\pm \frac{a_0}{b_0})^n \frac{g(x)}{b_0}.
    $$

    This immediately tells us that the matrix of transfer from $\ncp{1,a_0\beta,a_0\beta^2+a_1\beta}$ to $\ncp{1,b_0\alpha,b_0\alpha^2+b_1\alpha}$ is 
    $$
    \begin{bmatrix} 
    1 & b & * \\
    0& c& *\\
    0& 0& \frac{a_0}{b_0}
    \end{bmatrix}.
    $$
    Since $n\ge 4$ the above matrix must also be integral and invertible.
    It follows that $a_0=b_0$ and thus $\disc(f)=\disc(g)$.
    
    Since $R'_{(f_e,m)}=R'_{(g_d,m')}$ we also have 
    $$
    \frac{\disc(f)}{m^2}=\frac{\disc(g)}{m'^2},
    $$ 
    and we immediately get $m=m'.$
    
    Now, without loss of generality, we may assume $c=1$, for if $c=-1$ we may change $g(x)$ to $(-1)^ng(-x).$ We let $l=\frac{b}{b_0}\in \Q.$ Thus, $f(x+l)=g(x).$
    
    We make note of the fact that $f$ and $g$ are weakly divisible by the same value $m$.
    
    Observing the matrix of transformation from $R_{f_e}$ to $R_{g_d}$ given using \cref{l-matrix of tranformation}, we notice that the entry in the $(n-1)^{th}$ row and $n^{th}$ column of the matrix of transfer for the canonical bases of $R_{f_e}$ to $R_{g_d}$ is $d-e+l$.
    
    Thus, the entry in the $(n-1)^{th}$ row and $n^{th}$ column of the matrix of transfer for the canonical bases of $R'_{(f_e,m)}$ to $R'_{(g_d,m)}$ is $\frac{d-e+l}{m}$. Thus $r=\frac{d-e+l}{m}$ must be an integer. It follows that $\exists r\in \Z : g(x)=f(x+mr-d+e).$
\end{proof}
\section{\bf Rings and Orders when ordered by discriminant}

\bigskip
\begin{definition}
    \begin{equation*}
        W(s:t):=\Big\{\sum_{i=0}^n a_ix^{n-i}\in \Z[x]:0\le a_1\le \frac{s}{2}\le a_0\le s,  |a_i|\le st^i  \text{ for all } 2\le i\le n \Big\}
    \end{equation*}
\end{definition}
\begin{remark}
    The above set is a subset of a fundamental domain for action of integers by translation on polynomials. That is, for any polynomial $f$ there exists at most one integer $l$ such that $f(x+l)\in W(s:t).$
\end{remark}
\begin{lemma}\label{minkow}If $f$ is a real monic polynomial, then there exists a $\rho_f\in \R$ (continuously varying with $f$) such that $\lambda\rho^{n} f(\frac{x}{\rho})$ is a Reduced-$1$-polynomial for all $\rho\ge \rho_f$ and $\lambda\ge 1$.

If $\rho \ge m^{1/(n-2)}\rho_f$ and $\lambda\ge 1$ the polynomial $\lambda\rho^{n} f(\frac{x}{\rho})$ is a Reduced-$m$-polynomial.
\end{lemma}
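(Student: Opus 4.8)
The plan is to compute the Gram--Schmidt data of the canonical basis of $R'_{(g,m)}$, for $g(x):=\lambda\rho^{n}f(x/\rho)$, inside $R'_{(g,m)}\otimes\R\simeq\R^{r}\oplus\C^{s}$ and to express it through the Gram--Schmidt data of the canonical basis of $R_{f}\otimes\R$. By the remark following the definition of a Reduced-$m$-polynomial we may take $l=0$, and we assume $f$ is separable, so that $R_{f}\otimes\R$ is \'etale and the canonical norm is positive definite. Write $f(x)=\sum_{i=0}^{n}c_{i}x^{n-i}$ with $c_{0}=1$, let $\beta$ be the image of $x$ in $R_{f}\otimes\R$, let $\ncp{1,\tilde B_{1},\dots,\tilde B_{n-1}}$ be the canonical basis of $R_{f}$, and let $u_{1},\dots,u_{n}$ be its Gram--Schmidt lengths, so $u_{1}=\|1\|$ and, for $2\le i\le n$, $u_{i}$ is the length of the component of $\tilde B_{i-1}$ orthogonal to $\mathrm{span}(1,\tilde B_{1},\dots,\tilde B_{i-2})$. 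Separability makes the $u_{i}$ strictly positive and continuous in $f$.

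The key point is that the $\R$-algebra map $R'_{(g,m)}\otimes\R\to R_{f}\otimes\R$ sending the generator $\delta$ of $R_{g}\otimes\R$ to $\rho\beta$ is an isometric isomorphism for the canonical norms — the roots of $g$ are exactly $\rho$ times those of $f$ — and that under it $B_{0}\mapsto 1$ while $B_{k}\mapsto\lambda\rho^{k}\tilde B_{k}$ for $1\le k\le n-1$, which is a direct coefficient chase using that the coefficient of $x^{n-j}$ in $g$ is $\lambda\rho^{j}c_{j}$. Consequently, inside $R_{f}\otimes\R$, the canonical basis of $R'_{(g,m)}$ becomes
$$\ncp{\,1,\ \lambda\rho\,\tilde B_{1},\ \lambda\rho^{2}\,\tilde B_{2},\ \dots,\ \lambda\rho^{n-2}\,\tilde B_{n-2},\ \tfrac{\lambda\rho^{n-1}}{m}\,\tilde B_{n-1}\,}.$$
Rescaling the vectors of a basis by nonzero scalars changes none of the partial spans and multiplies the $i$-th Gram--Schmidt projection by the $i$-th scalar, so the Gram--Schmidt lengths $t_{1},\dots,t_{n}$ of this basis are $t_{1}=u_{1}$, $t_{i}=\lambda\rho^{\,i-1}u_{i}$ for $2\le i\le n-1$, and $t_{n}=\tfrac{\lambda\rho^{\,n-1}}{m}u_{n}$.

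It remains to impose the conditions $t_{2}/t_{1}\ge 2$ and $t_{i}/t_{2}\ge 2$ for $3\le i\le n$ defining a Normally Minkowski Reduced basis. Here $t_{2}/t_{1}=\lambda\rho\,u_{2}/u_{1}$ is nondecreasing in $\lambda$, whereas $\lambda$ cancels in every other ratio: $t_{i}/t_{2}=\rho^{\,i-2}u_{i}/u_{2}$ for $3\le i\le n-1$, and $t_{n}/t_{2}=\rho^{\,n-2}u_{n}/(m\,u_{2})$. Hence all the required inequalities hold once $\rho\ge 2u_{1}/u_{2}$, $\rho\ge(2u_{2}/u_{i})^{1/(i-2)}$ for $3\le i\le n-1$, and $\rho\ge m^{1/(n-2)}(2u_{2}/u_{n})^{1/(n-2)}$, the hypothesis $\lambda\ge 1$ being used only for the first. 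Setting
$$\rho_{f}:=\max\Big\{\,1,\ \tfrac{2u_{1}}{u_{2}},\ \max_{3\le i\le n-1}\big(\tfrac{2u_{2}}{u_{i}}\big)^{1/(i-2)},\ \big(\tfrac{2u_{2}}{u_{n}}\big)^{1/(n-2)}\,\Big\},$$
the case $m=1$ holds for all $\rho\ge\rho_{f}$ and $\lambda\ge 1$; and for $m\ge 1$, the hypothesis $\rho\ge m^{1/(n-2)}\rho_{f}$ gives both $\rho\ge\rho_{f}$ (so the first two families hold, since $m^{1/(n-2)}\ge 1$) and $\rho\ge m^{1/(n-2)}(2u_{2}/u_{n})^{1/(n-2)}$ (so the last holds). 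Continuity of $\rho_{f}$ in $f$ is immediate from continuity and positivity of the $u_{i}$.

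The only step requiring genuine care is the isometry-and-scaling claim of the second paragraph: one must verify that $x\mapsto x/\rho$ composed with the overall rescaling by $\lambda$ acts on the canonical basis of $R'_{(g,m)}$ exactly by $\mathrm{diag}(1,\lambda\rho,\dots,\lambda\rho^{n-2},\lambda\rho^{n-1}/m)$ under the natural identification of $R'_{(g,m)}\otimes\R$ with $R_{f}\otimes\R$, and that this identification preserves the canonical norm, so that the Gram--Schmidt lengths scale by exactly those diagonal entries. Everything else is elementary; the exponent $1/(n-2)$ appears precisely because only the last basis vector is divided by $m$ while the $i$-th basis vector already carries $\rho^{\,i-1}$.
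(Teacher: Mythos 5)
Your proposal is correct and follows essentially the same route as the paper: perform Gram--Schmidt on the canonical basis of $R_f$, observe that passing to $\lambda\rho^n f(x/\rho)$ rescales the $i$-th Gram--Schmidt length by $\lambda\rho^{i-1}$ (and the last by an extra $1/m$ for $R'_{(\cdot,m)}$), and then choose $\rho_f$ as the obvious maximum so that the Normally-Minkowski-Reduced ratios hold. Your write-up is in fact slightly more careful than the paper's in spelling out the isometric identification of $R'_{(g,m)}\otimes\R$ with $R_f\otimes\R$ and the exact scalars $B_k\mapsto\lambda\rho^k\tilde B_k$, but there is no substantive difference in approach.
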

\begin{proof}
We follow the argument of Bhargava-Shankar-Wang proving in Lemma 5.2 in\cite{BSW-monicsquarefree}.

Set 
$$
f(x)=x^n+a_{n-1}x^{n-1} +\cdots+ a_{k} x^{n-k}+\cdots+ a_0
$$
and set $a_n=1.$

We perform a Gram-Schmidt reduction of the basis for $R_f$ using the canonical distance form on $R_f\bigotimes \R\simeq \R^r\bigoplus \C^s$($r+2s=n$). We write 
$$
\ncp{1,\:  \delta , \: \delta^2+a_{n-1}\delta,\:\cdots, \:\sum_{i=0}^{k-1}a_{n-i}\delta^{k-i},\:\cdots,\:\sum_{i=0}^{n-2}a_{n-i}\delta^{k-i}}= \ncp{B}M [(t_1,t_2,...,t_n)]
$$ 
where $B$ is an orthonormal ordered basis of vectors, $M$ is an upper triangular unipotent 
matrix, and $[(t_1,t_2,..,t_n)]$ denotes a diagonal matrix with those entries along the diagonal.
Now we note that if \begin{align*}
    \ncp{1,\: \delta , \:& \delta^2+a_{n-1}\delta,\:\cdots, \:\sum_{i=0}^{k-1}a_{n-i}\delta^{k-i},\:\cdots,\:\sum_{i=0}^{n-2}a_{n-i}\delta^{n-1-i}}\\
    &= \ncp{B}M [(t_1,t_2,\cdots,t_k,\cdots,t_n)] 
    \end{align*}    
then, using \cref{t_i}, we see that 
\begin{align*}
    \ncp{1,\: \lambda(\rho \delta) , \: &\lambda((\rho\delta)^2+a_{n-1}\rho\delta),\:\cdots, \:\lambda(\sum_{i=0}^{k-1}a_{n-i}(\rho\delta)^{k-i}),\: \cdots,\:\lambda(\sum_{i=0}^{n-2}a_{n-i}(\rho\delta)^{n-1-i}})\\
    &= \ncp{B}M_\rho [(t_1,\lambda \rho t_2,\cdots, \lambda\rho^{k-1}t_k,\dots ,\lambda\rho^{n-1} t_n)].
\end{align*}
Thus, for each polynomial $f$ one may find $\rho_f$ which is a continuous function of $f$ such that $\lambda \rho^n f(\frac{x}{\rho})$ is Minkowski reduced for all $\lambda\ge 1$ and $\rho\ge\rho_f$. One can simply take 
$$
\rho_f:=\max\{\max_{i\ge 3}\{(\frac{2t_{2}}{t_{i}})^{1/(i-2)}\}, \frac{2t_1}{t_2}\}.
$$
Furthermore, we note that translating the polynomial changes the canonical basis by an upper triangular matrix. 
\medskip\par
Thus, if $g(x)=\lambda \rho^{n}f(\frac{X}{\rho}),$ then the canonical basis of $R'_{(g_l,m)}$ after Gram-Schmidt process will look like 
$$
\ncp{B}M_{\rho,m,l} [(t_1,\lambda \rho t_2,\cdots, \lambda\rho^{k-1}t_k,\cdots,\lambda\rho^{n-2}t_{n-1},\frac{\lambda\rho^{n-1}}{m} t_n)]
$$
where $M_{\rho,m,l}$ is a uni-potent upper triangular matrix. 

It follows that for $\rho\ge m^{1/(n-2)}\rho_f$  and $\lambda \ge 1,$ $\lambda \rho^{n}f(\frac{X}{\rho})$ is Reduced-$m$-polynomial.
\end{proof}

\begin{definition}
\begin{equation*}
    W(s:t:m):=\{(f,l):f\in W(s:t), 0\le l<m, f \textit{ is weakly divisible by } m  \textit{ at } l\}
\end{equation*}
\end{definition} with the understanding that each element $(f,l) \in W(s:t:m)$ where $f$ is Reduced-$m$-Polynomial will give distinct rings, see Theorem \cref{distinct-rings}.

\begin{lemma}
    Given $a_0,a_1,\cdots,a_{n-2},l,m$ there is a unique choice of $a_{n-1}$ and $a_n$ satisfying \begin{itemize}
        \item $$B+1 \le a_{n-1}\le B+m$$
        \item $$C+1 \le a_{n-1}\le C+m^2$$
        \item $$a_0x^n+a_1x^{n-1}+\cdots+a_n \textit{ is weakly divisible by $m$ at $l.$} $$
    \end{itemize}
\end{lemma}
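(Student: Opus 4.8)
The plan is to unwind the definition of weak divisibility into two explicit congruences on the coefficients and then observe that these congruences can be solved one unknown at a time.

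First I would write $f = a_0x^n+a_1x^{n-1}+\cdots+a_{n-1}x+a_n$ and record that, by definition, $f$ is weakly divisible by $m$ at $l$ exactly when
\begin{align*}
a_0l^n+a_1l^{n-1}+\cdots+a_{n-1}l+a_n &\equiv 0 \bmod{m^2},\\
na_0l^{n-1}+(n-1)a_1l^{n-2}+\cdots+2a_{n-2}l+a_{n-1} &\equiv 0 \bmod m.
\end{align*}
The structural observation that makes the lemma work is that the second congruence, coming from $\partial f/\partial x$, does not involve $a_n$ at all (differentiation kills the constant term) and contains $a_{n-1}$ with coefficient $1$; likewise $a_n$ occurs in the first congruence with coefficient $1$. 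So the pair of conditions is triangular in the unknowns $(a_{n-1},a_n)$.

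Next I would solve sequentially. Given $a_0,\dots,a_{n-2}$ and $l$, the second congruence becomes $a_{n-1}\equiv -\big(na_0l^{n-1}+\cdots+2a_{n-2}l\big)\bmod m$, which pins $a_{n-1}$ down to a single residue class modulo $m$; hence there is exactly one integer $a_{n-1}$ with $B+1\le a_{n-1}\le B+m$, whatever the (data-dependent) quantity $B$ is, since any interval of length $m$ contains precisely one integer from a given residue class. With this $a_{n-1}$ now fixed, the first congruence becomes $a_n\equiv -\big(a_0l^n+a_1l^{n-1}+\cdots+a_{n-1}l\big)\bmod{m^2}$, pinning $a_n$ down to a single residue class modulo $m^2$, so there is exactly one admissible $a_n$ with $C+1\le a_n\le C+m^2$. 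For this choice of $(a_{n-1},a_n)$ both congruences hold, so $a_0x^n+\cdots+a_n$ is weakly divisible by $m$ at $l$; conversely any choice meeting the three bulleted requirements must coincide with it, first by the mod-$m$ constraint on $a_{n-1}$ and then by the mod-$m^2$ constraint on $a_n$. This yields both existence and uniqueness.

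I do not expect a genuine obstacle here: the only point requiring care is the order of elimination — $a_{n-1}$ must be determined before $a_n$ — and this is legitimate precisely because the $\partial f/\partial x$ condition is independent of the constant term $a_n$. (I also note that the second displayed interval in the statement should read $C+1\le a_n\le C+m^2$ rather than being repeated for $a_{n-1}$.)
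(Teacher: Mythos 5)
Your argument is correct and is exactly the direct verification the paper has in mind (its proof is just the one line ``Follows from the definition directly''): the derivative condition is independent of $a_n$ and linear in $a_{n-1}$ with unit coefficient, so $a_{n-1}$ is fixed modulo $m$ and then $a_n$ modulo $m^2$, and each prescribed interval contains exactly one representative of the relevant residue class. You are also right that the second bullet in the statement is a typo and should read $C+1\le a_n\le C+m^2$.
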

\begin{proof}
    Follows from the definition directly.
\end{proof}
Thus, we count elements in $W(s:t:m)$ by choosing $l, a_0,\cdots a_{n-2}$ in generality and then use the above lemma to choose the final two coefficients of the polynomial. 
\begin{align*}
    &|W(s:t:m)|\\
    =&m\cdot (\frac{s}{2}+O(1))^2\cdot (2st^2+O(1))\cdots(2st^k+O(1)\cdots
    (2st^{n-2}+O(1))\cdot(\frac{2st^{n-1}}{m}+O(1))\cdot (\frac{2st^n}{m^2}+O(1))\\
    =&\frac{2^{n-3}s^{n+1}t^{\frac{n(n+1)}{2}-1}}{m^2}(1+O(\frac{m^2}{st^n} +\frac{1}{s})).
\end{align*}

Let $B_\epsilon$ denote a compact set in 
$$
\{f\in\R[x]: H(f) \le 1, \Zdisc(f) \neq 0 \}
$$
such that 
$$
Vol(B_\epsilon)\ge (1-\epsilon)Vol(\{f:H(f)\le 1\}).
$$
Then for this region we can construct a $\rho_B$ such that all polynomials 
$f$ with height $\ge \rho_B$ 
satisfying 
$$
H(f)^n f(\frac{X}{H(f)})\in B_\epsilon
$$
are Reduced-$1$-Polynomials. 
From the previous lemma, we can say that polynomials of this type with height 
$$
\ge m^{1/(n-2)}\rho_B
$$
are Reduced-$m$-Polynomials. Furthermore, we may consider $B_\epsilon$ to be a finite union of disjoint boxes $B_i$ (depending on epsilon).

Thus, correspondingly, if $d_{i,r}$ are the dimensions of the box $B_i,$ we define $W(s:t:m)_{B_i}$ to be those polynomials that satisfy 
$$
\frac{H(f)^n}{s}f(\frac{X}{H(f)})\in B_i
$$ 
and 
$$
\frac{f^{(2)}(0)}{2!} \ge s(\rho_{B})^{n-2}m.
$$
The latter condition, $\frac{f^{(2)}(0)}{2!} \ge s(\rho_{B})^{n-2}m,$ ensures that polynomials we count are of `large enough'(see Lemma \cref{minkow}) height to be a Reduced-$m$-Polynomial. 
\medskip\par
Thus,
\begin{align*}
    |W(s:t:m)_{B_i}|= & m\cdot (\frac{s}{2} +O(1))(\frac{s}{2}d_{i,1}+O(1))\cdot (2st^2d_{i,2}+O(1))\cdots (2st^k d_{i,k}+O(1))\cdots(2st^{n-3} d_{i,n-3}+O(1)) \\ 
    & \quad\quad \cdot(2st^{n-2}d_{i,n-1}-2s(\rho_B)^{n-2}m+O(1)) \:\:\cdot(\frac{2st^{n-1}d_{i,n-1}}{m}+O(1))\cdot (\frac{2st^nd_{i,m}}{m^2}+O(1))\\
    =&\frac{2^{n-3}Vol(B_i)s^{n+1}t^{\frac{n(n+1)}{2}-1}}{m^2}(1+O_\epsilon(\frac{m^2}{st^n} +\frac{1}{s} + \frac{m}{t^{n-2}})).
\end{align*}
\begin{definition}
\begin{equation*}
    W(s:t:m)^{Red}:=\{(f,l)\in W(s,t,m): f \textit{ is a Reduced-}m\textit{-Polynomial.}  \}
\end{equation*}
\end{definition}

\begin{remark}\label{RedPolyDesity}
Thus,
  \begin{align*}
    |W(s:t:m)^{Red}| &\ge \sum_i |W(s:t:m)_{B_i}|\\ &\ge (1-\epsilon)\cdot\frac{2^{n-3}s^{n+1}t^{\frac{n(n+1)}{2}-1}}{m^2}(1+O_\epsilon(\frac{m^2}{st^n} +\frac{1}{s}+\frac{m}{t^{n-2}})).
\end{align*}  
\end{remark}

Since, the discriminant of the polynomials in $W(s:t)$ is $\ll_n s^{2n-2} t^{n(n-1)},$ the discriminant of the weakly divisible ring in $W(s:t:m)$ it corresponds to will be 
$$
\ll_n \frac{s^{2n-2}t^{n(n-1)}}{m^2}.
$$

We set $X=\frac{s^{2n-2}t^{n(n-1)}}{m^2}$ i.e 
$$
t=(\frac{m^2X}{s^{2n-2}})^{\frac{1}{n(n-1)}}.
$$

Let $S(X,s,m)$ denote the number of rings with discriminant $\ll_n X$ counted by $W(s:t:m)^{Red}$ for the above value of $t.$

It follows that
\begin{theorem}For $n\ge 4,$
    \begin{equation}
        S(X,s,m)\gg_n (1-\epsilon) \frac{X^{\frac{1}{2}+\frac{1}{n}}\cdot s^{\frac{2}{n}}}{m^{1-\frac{2}{n}}}
    \end{equation}
provided that\begin{enumerate}
    \item $s\gg_\epsilon 1$
    \item\label{ti} $X\gg_{\epsilon} s^{2n-2}\cdot m^{n-1+\frac{2}{n-2}}$ (Condition corresponding to $t^{n-2}\gg_\epsilon m$ - required for injectivity).
    \item $X\gg_{\epsilon} s^{n-1}\cdot m^{2n-4}$ (Condition corresponding to $st^n\gg_\epsilon m^2$-required for there being enough polynomials which are weakly divisible by $m$)
\end{enumerate}   
\end{theorem}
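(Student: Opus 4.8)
The plan is to feed the injectivity theorem \cref{distinct-rings} and the density estimate of \cref{RedPolyDesity} into the bookkeeping, then substitute the prescribed value of $t$ and check that the three hypotheses are exactly what is needed to kill the error terms.

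\emph{Step 1: injectivity on $W(s:t:m)^{Red}$.} First I would record that the map $(f,l)\mapsto R'_{(f_l,m)}$ is injective on $W(s:t:m)^{Red}$. If $(f,e)$ and $(g,d)$ both lie in $W(s:t:m)^{Red}$ and give the same ring, then since $n\ge 4$ we may apply \cref{distinct-rings} (with the modulus forced to be the common $m$), producing $r\in\Z$ with $g(x)=f(x+mr-d+e)$. Because $f,g\in W(s:t)$ and $W(s:t)$ lies inside a fundamental domain for integer translation on polynomials, we must have $mr-d+e=0$; hence $g=f$ and $d\equiv e\bmod m$, and then $d=e$ since $0\le d,e<m$. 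Thus $(f,e)=(g,d)$.

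\emph{Step 2: discriminant bound and reduction to counting.} Every $(f,l)\in W(s:t:m)$ has $\disc(f)\ll_n s^{2n-2}t^{n(n-1)}$, so the ring $R'_{(f_l,m)}$ has discriminant $\disc(f)/m^2\ll_n s^{2n-2}t^{n(n-1)}/m^2=X$ for the chosen $t$. Combined with Step 1, this gives $S(X,s,m)=|W(s:t:m)^{Red}|$, so by \cref{RedPolyDesity}
\[
S(X,s,m)\ \ge\ (1-\epsilon)\,\frac{2^{n-3}s^{n+1}t^{\frac{n(n+1)}{2}-1}}{m^2}\Bigl(1+O_\epsilon\bigl(\tfrac{m^2}{st^n}+\tfrac1s+\tfrac{m}{t^{n-2}}\bigr)\Bigr).
\]

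\emph{Step 3: substitute $t=(m^2X/s^{2n-2})^{1/(n(n-1))}$ and verify the side conditions.} Writing $\frac{n(n+1)}{2}-1=\frac{(n-1)(n+2)}{2}$, a direct exponent computation shows that $\frac{2^{n-3}s^{n+1}t^{(n-1)(n+2)/2}}{m^2}$ equals a constant depending only on $n$ times $X^{\frac12+\frac1n}\,s^{\frac2n}\,m^{-(1-\frac2n)}$. To make the bracketed factor $\gg_\epsilon 1$ it suffices that each error term be at most a small constant. Unwinding $t$: $\tfrac1s\ll_\epsilon 1$ is hypothesis (1); the inequality $\tfrac{m}{t^{n-2}}\ll_\epsilon 1$, i.e. $t^{n-2}\gg_\epsilon m$, becomes $X\gg_\epsilon s^{2n-2}m^{n-1+\frac2{n-2}}$ after noting $n-1+\frac2{n-2}=\frac{n^2-3n+4}{n-2}=\frac{n(n-1)}{n-2}-2$, which is hypothesis (2); and $\tfrac{m^2}{st^n}\ll_\epsilon 1$, i.e. $st^n\gg_\epsilon m^2$, becomes $X\gg_\epsilon s^{n-1}m^{2n-4}$ since $st^n=(m^2X)^{1/(n-1)}/s$, which is hypothesis (3). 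Combining the three pieces yields $S(X,s,m)\gg_n(1-\epsilon)X^{\frac12+\frac1n}s^{\frac2n}m^{-(1-\frac2n)}$.

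The substantive inputs — the injectivity theorem \cref{distinct-rings} and the density estimate \cref{RedPolyDesity} — are already in hand, so the remaining work is essentially the exponent arithmetic in Step 3. The one place deserving care is translating the analytic conditions $st^n\gg m^2$ and $t^{n-2}\gg m$ into the stated polynomial inequalities in $X$, $s$, $m$ (in particular confirming that the exponent $n-1+\frac2{n-2}$ really equals $\frac{n^2-3n+4}{n-2}$), and choosing the implied constants in (1)--(3) large enough that the three error terms together stay below, say, $\tfrac12$. I do not anticipate a genuine obstacle beyond this bookkeeping.
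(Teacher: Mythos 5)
Your proposal is correct and takes essentially the same route as the paper, which leaves this computation implicit behind ``It follows that'': injectivity on $W(s:t:m)^{Red}$ via \cref{distinct-rings} together with the fundamental-domain remark, the density estimate of \cref{RedPolyDesity}, and the exponent arithmetic after substituting $t=(m^2X/s^{2n-2})^{1/(n(n-1))}$. The three exponent identities you single out (the power of $t$, $n-1+\tfrac{2}{n-2}=\tfrac{n^2-3n+4}{n-2}$, and $st^n=(m^2X)^{1/(n-1)}/s$) all check out.
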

To count ultra-weakly divisible rings which are weakly divisible by rings we wish to choose $s$ appropriately. Note that within the constraints for a fixed $X$, the rings counted will all be distinct for any choice of $s$ and then followed by any choice of $m.$ This observation requires us to use \cref{distinct-rings} which in turn requires $n\ge 4.$

We will restrict ourselves to a region where \cref{ti} provides a stronger bound for $s$.
This amounts to the region
$$
(\frac{X}{m^{2n-4}})^{\frac{1}{n-1}}\ge (\frac{X}{m^{n-1+\frac{2}{n-2}}})^{\frac{1}{2n-2}} \iff X\ge m^{3n-7-\frac{2}{n-2}}.
$$ 

In this region, the largest value we may pick for $s$ is $(\frac{X}{m^{n-1+\frac{2}{n-2}}})^{\frac{1}{2n-2}}.$ We pick this value to get that the number of distinct rings weakly divisible by $m$ and with discriminant less than $X$ is 
$$
\gg \frac{X^{\frac{1}{2}+\frac{1}{n-1}}}{m^{1-\frac{(n-3)}{(n-1)(n-2)}}}.
$$

We can sum over all possible $m$ satisfying the condition $m\ll_{\epsilon} X^{1/(3n-7-\frac{2}{n-2})}$ to get that the number of weakly divisible rings with discriminant less than $X$ to be
$$
\gg_{n} X^{\frac{1}{2}+\frac{1}{n-\frac{4}{3}}}.
$$

This proves \cref{1}.
\begin{remark}
    Looking at the region where the other condition is dominant gives a lower bound of the same order.
\end{remark}
\begin{remark}
    If we had injectivity intrinsically, say if injectivity conjecture from \cite{Gaurav}(and a future paper) were true, then we would get
    $$
    \bigg\{\substack{\text{Rings or Orders} \\ \text{with discriminant}\le X}\bigg\}\gg X^{\frac{1}{2}+\frac{1}{n-\frac{3}{2}}}.
    $$
    \end{remark}
\begin{remark}\label{Matt-Baker-Argument}
    To restrict oneself to orders in number fields with Galois group $S_n$ we can look at only those points which are irreducible modulo $2$ (thus, forcing the polynomial to be irreducible itself and forcing and $n$-cycle in the Galois Group), which have a factor of degree $p$ for some prime $n/2< p \le n$ modulo 3 (thus,forcing a $p$-cycle in the Galois Group). This completes the argument for $n\neq 6,7,$ as transitive subgroup of $S_n$ containing a $p$-cycle for some $n/2<p\le n$ must be $S_n$. For $n=6,7$ this group is forced to be $A_n.$ We can make sure that the binary forms have all linear factors except for exactly one degree $2$ factor modulo 5. This will imply the existence of a transposition in the Galois Group, which in turn will force the Group to be $S_n.$ We refer to 
    
    \href{https://mattbaker.blog/2014/05/02/newton-polygons-and-galois-groups/}{https://mattbaker.blog/2014/05/02/newton-polygons-and-galois-groups/}

    for an excellent exposition on the matter.
\end{remark}
\section{\bf A lower bound for the number of number fields with bounded discriminant}

\bigskip

We set $n\ge 6.$
\begin{definition}
\begin{equation*}
    W(s:t:m):=\{(f,l):f\in W(s:t),0\le l<m, f \textit{ is weakly divisible by } m \textit{ at } l\}
\end{equation*}
\end{definition}
For small $t$ and $m$ satisfying $m\le t^{n-\epsilon}\le s^{\frac{1}{2}}t^{\frac{n}{2}}$, we will count number of elements $(f,l)$ in $W(s,t,m)$ such that for all primes (in $\Z$) 
\begin{equation}\label{local-cond}
    p\nmid m\Rightarrow p\nmid [\O_{\K_f} : R_f] \textit{ and } p|m \Rightarrow p\nmid \frac{\disc(f)}{m^2}.
\end{equation}
Note that, when $f$ is weakly divisible by $m$ at $l,$

\begin{center}
    $[\O_{\K_f} : R'_{(f_l,m)}]$ divides $[\O_{\K_f} : R_f]$ 
    
    and 

    $[\O_{\K_f} : R'_{(f_l,m)}]^2$ divides $\frac{\disc{f}}{m^2}.$
\end{center} 

Together these imply that the ring counted by $(f,l)$ when $f$ is weakly divisible by $m$ at $l$ and satisfying \cref{local-cond} is the ring of integers of $\K_f$ as they imply that the index of $R_{(f_l,m)}$ in  $\O_{\K_f}$ has no prime factor. Let us refer to these as $NF$ polynomials (as they correspond to distinct number fields) and define
\begin{definition}
    $$
    W(s:t:m)^{NF}:=\{(f,l)\in W(s:t:m): f \textit{ is a $NF$ polynomial}\}
    $$
\end{definition}

In \cite{bhargava2022squarefree} Proposition A.2, it is shown that the ratio of the number of the polynomials over $\sfrac{\Z}{p^2}$ for which $p|[\O_{\K_f} : R_f]$ to all polynomials over $\sfrac{\Z}{p^2}$ is 
$$
d_p:=\frac{1}{p^2}+\frac{1}{p^3}-\frac{1}{p^5}.
$$ 
Thus, when $p\nmid m,$ we have $v_p([\O_{\K_f} : R'_{(f_l,m)}])=v_p([\O_{\K_f} : R_f])$ and thus, the ratio of polynomials over $\sfrac{\Z}{p^2}$ for which $p|[\O_{\K_f} : R'_{(f_l,m)}]$ to all polynomials over $\sfrac{\Z}{p^2}$ is also $d_p.$

We note that, for $p|m$ the condition 
$$
p\nmid \frac{\disc(f)}{m^2}
$$
corresponds to 
$$
p\nmid \frac{f^{(2)}(l)}{2}\textit{ and } p\nmid \disc(f^*)
$$ 
where $f$ is weakly divisible by $p$ at $l$ and $f^{2}(l)$ denotes the second derivative of $f$ at $l$. Moreover, 
$$
f^*=\frac{f(x)-f(l) -f'(l)(x-l)}{(x-l)^2}.
$$
which we see is a polynomial. We see the discriminant as a determinant of Sylvester matrix associated to $f_l$ and note that $p^2|f(l)$ and $p|f'(l)$.   We get 
$$
\frac{\disc(f)}{m^2}\equiv (\frac{f^{(2)}(l)}{2})^2\cdot\disc(f^*)\bmod m
$$ 
from which the result follows. 

\begin{definition}\label{otherlocal}
    \begin{equation}
        c_p:=1-(1-\frac{1}{p})(1-\frac{1}{p}).
    \end{equation}
\end{definition}
Thus, the reciprocal polynomial of $f^*$ will be a degree $n-2$ polynomial with non zero leading coefficient when seen as an element of $\F_p[x].$ The number of polynomials with non zero leading coefficient  is $(p-1)$ (number of choices for leading coefficient) times the number of such monic polynomials, which is $p^n\cdot(1-\frac{1}{p}),$ from Proposition 6.4 in \cite{SQFREE-Prob}.
Which is 
$$
(p-1)\cdot(1-\frac{1}{p})p^n.
$$ 
Dividing by $p^{n+1}$ and subtracting this quantity from 1 we get $c_p.$ By our qualification when $n\ge 6,$ $c_p$ denotes the proportion of polynomials which are weakly divisible by $p$ for which $p^3|\disc(f)$ in the set of all polynomials which are weakly divisible by $p.$

Based on our argument above, for $p|m$ and for a fixed $l$, $c_pp^{n+1}$ is also the number of $(f,l)$ seen modulo $p$ for which $p\nmid \frac{\disc(f)}{m^2}$.

Let 
$$
W(s:t:m:m'):=\{(f,l)\in W(s:t:m): \frac{m'}{(m',m)}|[\O_{\K_f} : R_f], (m',m)|\frac{\disc(f)}{m^2}\}.
$$

Let 
$$
\tau_{m'}:=(\prod_{p|m'}d_p \prod_{p|(m',m)}\frac{c_p}{d_p} )
$$
\begin{lemma}If $m'\ll \min\{s,\frac{st^n}{m^2}\},$ then
    $$
    |W(s:t:m:m')|=\tau_{m'}2^{n-3}\frac{s^{n+1}t^{\frac{n(n+1)}{2}-1}}{m^2}(1+O(\frac{m'}{s} + O(\frac{m^2 m'}{st^n}))).
    $$
\end{lemma}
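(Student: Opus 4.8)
The plan is to count the elements $(f,l)$ of $W(s:t:m)$ satisfying the divisibility constraints in the definition of $W(s:t:m:m')$ by a standard lattice-point/fundamental-domain count combined with an independence-of-local-conditions argument. First I would parametrize $W(s:t:m)$ as in the count of $|W(s:t:m)|$ given above: choose $l$ with $0 \le l < m$, choose the coefficients $a_0, a_1, \ldots, a_{n-2}$ freely in their respective boxes (each of side $\tfrac{s}{2}+O(1)$ for $a_0, a_1$ and $2st^i + O(1)$ for $a_i$, $2 \le i \le n-2$), and then apply the lemma that, for each such choice together with $l$ and $m$, there is a unique pair $(a_{n-1}, a_n)$ in prescribed residue windows making $f$ weakly divisible by $m$ at $l$. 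This gives the ``ambient'' count $2^{n-3} s^{n+1} t^{\tfrac{n(n+1)}{2}-1}/m^2$ up to the error already recorded.

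Next I would impose the extra conditions defining $W(s:t:m:m')$, namely $\tfrac{m'}{(m',m)} \mid [\O_{\K_f} : R_f]$ and $(m',m) \mid \tfrac{\disc(f)}{m^2}$. I would factor $m'$ into prime powers — or, since the conditions are mod-$p$ conditions in the relevant literature (the index condition from \cite{bhargava2022squarefree} Proposition A.2 is a condition on $f$ mod $p^2$, and the discriminant condition is a condition mod $p$), reduce to a product over $p \mid m'$. For $p \nmid m$ (equivalently $p \mid \tfrac{m'}{(m',m)}$) the proportion of admissible polynomials mod $p^2$ is $d_p$, as recalled via the remark that $v_p([\O_{\K_f}:R'_{(f_l,m)}]) = v_p([\O_{\K_f}:R_f])$ when $p \nmid m$. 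For $p \mid (m',m)$ we instead want $p \mid \tfrac{\disc(f)}{m^2}$; the excerpt establishes that, among the $(f,l)$ weakly divisible by $p$, the proportion with $p \mid \tfrac{\disc(f)}{m^2}$ is $1 - c_p$ — wait, more precisely the proportion counted by $c_p p^{n+1}$ is the one with $p \nmid \tfrac{\disc(f)}{m^2}$, so the proportion with $p \mid \tfrac{\disc(f)}{m^2}$ relative to the weakly-divisible-by-$p$ polynomials is $1 - c_p$; but since in forming $W(s:t:m)$ we have already conditioned on weak divisibility by $m$ (hence by $p$), and the baseline proportion of weakly-divisible polynomials that additionally satisfy the $R_f$-index condition at $p$ is $d_p$, the correct multiplicative factor at such $p$ is $c_p/d_p$ — matching the definition of $\tau_{m'}$. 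Assembling, the density of admissible $(f,l)$ is $\prod_{p \mid m'} d_p \cdot \prod_{p \mid (m',m)} \tfrac{c_p}{d_p} = \tau_{m'}$.

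Then I would make the ``independence of local conditions'' precise: the conditions at distinct primes $p \mid m'$ are conditions modulo coprime moduli (powers of those primes), and the box-counting above distributes uniformly over residue classes modulo $m'^2$ (or modulo $\prod_{p\mid m'}p^2$) as long as each side length of the box is large compared to that modulus — which is exactly the hypothesis $m' \ll \min\{s, \tfrac{st^n}{m^2}\}$, controlling the two ``tightest'' coordinate directions (the $a_0,a_1$ directions of size $\asymp s$ and the last coefficient $a_n$ whose free range after fixing the weak-divisibility window has size $\asymp \tfrac{st^n}{m^2}$). Hence the count is $\tau_{m'}$ times the ambient count, with a multiplicative error of size $O(m'/s) + O(m^2 m'/(st^n))$ coming from the incomplete residue-class sums in those two directions (the other directions contribute lower-order errors already absorbed). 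This yields the stated formula.

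The main obstacle I expect is bookkeeping the interaction between the weak-divisibility conditioning and the local conditions: one must be careful that the index condition at $p \nmid m$ really does transfer verbatim from $R_f$ to $R'_{(f_l,m)}$ (handled by the valuation remark) and that at $p \mid m$ the relevant population is the already-restricted set of weakly-divisible polynomials, so that the factor is the \emph{conditional} proportion $c_p/d_p$ rather than $c_p$ itself — and to verify that these conditional densities are genuinely independent across primes, which requires CRT together with the uniform equidistribution of the box modulo $\prod_p p^2$ guaranteed by the size hypothesis on $m'$. The congruence $\tfrac{\disc(f)}{m^2} \equiv \bigl(\tfrac{f^{(2)}(l)}{2}\bigr)^2 \disc(f^*) \bmod m$, already derived above, is what makes the $p \mid m$ condition a clean mod-$p$ condition amenable to this counting.
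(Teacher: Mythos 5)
Your proposal is correct and follows essentially the same route as the paper: the paper's proof is a one-line version of your argument, tiling the coefficient region by boxes of side length $m'$ (together with the unique-$(a_{n-1},a_n)$ lemma) and observing that each complete box contains exactly $\tau_{m'}(m')^{n+1}$ admissible tuples $(f,l)$, with the stated error coming from the incomplete boxes in the two tight directions, exactly as you describe. One small slip in your prose: you assert at one point that the proportion of weakly-divisible-by-$p$ polynomials with $p\mid \disc(f)/m^2$ is $1-c_p$, yet you then (correctly) use the net local factor $c_p$ at primes $p\mid(m',m)$; since $c_p=1-(1-\tfrac1p)^2$ is by construction the proportion with $p^3\mid\disc(f)$, it is $c_p$ itself that is the density of the condition defining $W(s:t:m:m')$, and your final formula is the right one.
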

\begin{proof}
    We see this follows naturally by making cube boxes of dimension $n+1$ with side-length $m'$ for each of the coefficients of $f$ in the region and noting that there are exactly  $\tau_{m'}(m')^{n+1}$ tuples $(f,l)$ of interest in this region.
\end{proof}
Let 
$$
\Theta_M:=\prod_{p< M} \big(1-d_p\big) \prod_{p|m,p<M}\frac{1-c_p}{1-d_p}
$$
We thus have 
\begin{align*}
    |W(s:t:m)^{NF}|=&\sum_{m'\ge 1} \mu(m') |W(s:t:m:m')|\\
    =&\sum_{m'\le M}\mu(m')|W(s:t:m:m')| +O\Bigg(\mvcenter{\sum_{\substack{m'> M\\m'\:\square\textit{-free}}}|W(s:t:m:m')|}\Bigg)\\
\end{align*}
    We will refer to 
    $$
    O\Bigg(\mvcenter{\sum_{\substack{m'> M\\m'\:\square\textit{-free}}}|W(s:t:m:m')|}\Bigg)
    $$
    as the \textit{Error Term} which we will handle later. So, we have
\begin{align*} 
    |W(s:t:m)^{NF}|=&\sum_{m'\le M}\mu(m')\tau_{m'}2^{n-3}\frac{s^{n+1}t^{\frac{n(n+1)}{2}-1}}{m^2}(1+O(\frac{m'}{s})+O(\frac{m^2 m'}{st^n})) + \textit{Error Term}\\
    =&\Theta_M2^{n-3}\frac{s^{n+1}t^{\frac{n(n+1)}{2}-1}}{m^2}(1+M^2O(\frac{1}{s}+\frac{m^2}{st^n})) + \textit{Error Term}
\end{align*}
We note that the \textit{Error Term} counts a subset of tuples of the type $(f,l,m')$ such that $m'^2 |\disc(f)$ and $m'> M$. 

Note that for a given $f\in W(s:t)$ and a fixed $p^k||m$, $f$ being weakly divisible at $l$ modulo $p^k$ implies that $f(x)$ has a double root modulo $p^k$ at $l.$ Since, $f(x)$ can have at most $n$ roots counted with multiplicity modulo $p^k$, it can have at most $n/2$ distinct linear roots modulo $p^k$ with multiplicity strictly greater than $1$. This means that there is at most $(\frac{n}{2})^{\omega(m)}\ll m^\epsilon$ values of $l$ for which $f$ is weakly divisible by $m$ at $l$.
Thus, this sum can be bounded above by the number of tuples $(f,m')$ such that $m'>M$ and $m'^2|\disc(f)$ taken $m^{\epsilon}$ times. Since the coefficients of $f$ are all less than or equal to $st^n$ we may borrow the tail end estimate from \cite{bhargava2022squarefree}(Corollary 6.27) to bound the \textit{Error Term} by
$$
(\frac{(st^n)^{n+1}}{M^{\frac{1}{3}}(st^n)^{-c_n-\epsilon}} + \frac{(st^n)^{n+1}}{(st^n)^{\eta_n-\epsilon}})\cdot m^{\epsilon}
$$
where $\eta_n=\frac{1}{5n}$ and $c_n=0$ if $n$ is odd and is $\eta_n=\frac{1}{88n^6}$ and $c_n=\frac{1}{88n^5}$ as in Corollary 6.27 in \cite{bhargava2022squarefree}.

We pick $M=(st^n)^{3c_n+3\eta_n}.$

We also make note of the fact that $\Theta_M=O(\frac{1}{\log(m)^2}).$

It follows that, provided we have 
$$
m^{2+\epsilon}\le \min\bigg\{ (st^n)^{1-2\eta_n},\frac{(st^n)^{\eta_n}}{t^{\frac{n(n+1)}{2}+1}},(st^{n})^{1-\frac{3}{2}(c_n+\eta_n)}\bigg\}=\frac{(st^n)^{\eta_n}}{t^{\frac{n(n+1)}{2}+1}},
$$
we get
$$
|W(s:t:m)^{tm}|\gg  \frac{s^{n+1}t^{\frac{n(n+1)}{2}-1}}{m^{2+\epsilon}}.
$$

These $(f,l)$ count distinct rings with discriminant $\le \frac{s^{2n-2}t^{n(n-1)}}{m^2}$ when $m^{1+\epsilon} \le t^{n-2}$ since all but an epsilon proportion of these polynomials with height $\gg_\epsilon m^{\frac{1}{n-2}}$ are Reduced-$m$-Polynomials, see \cref{RedPolyDesity}. 

So setting 
$$
X:=\frac{s^{2n-2}t^{n(n-1)}}{m^2} \textit{ or } s=(\frac{m^2X}{t^{n(n-1)}})^{\frac{1}{2n-2}}
$$
we get number fields with discriminant $\le X$ and having a ring of integers weakly divisible by $m$ is 
$$
\gg X^{\frac{1}{2}+\frac{1}{n-1}}\frac{1}{tm^{1-\frac{2}{n-1}-\epsilon}}.
$$
We set $t\simeq m^{\frac{1}{n-2}+\epsilon}$ to make sure that all polynomials except for a proportion of $\epsilon$ (We pick $\epsilon$ appropriately small as compared the constant of proportionality in the $\gg$ above) of these polynomials will be Reduced-$m$-Polynomials and hence will correspond to distinct rings of integers and thus number fields. We thus get that the number of number fields with $\disc<X$ whose ring of integers is weakly divisible by $m$ is 
$$
\gg X^{\frac{1}{2}+\frac{1}{n-1}}\frac{1}{m^{1-\frac{2}{n-1}+\frac{1}{n-2}-\epsilon}}=X^{\frac{1}{2}+\frac{1}{n-1}}\frac{1}{m^{1-\frac{n-3}{(n-1)(n-2)}+\epsilon}}.
$$

The various restrictions namely, 
$$
m^{2+\epsilon}\le st^n \textit{ and  } m^{2+\epsilon}\le \frac{(st^n)^{\eta_n}}{t^{\frac{n(n+1)}{2}+1}}
$$ 
for the chosen $t,$ combine into 
$$
m^{1+\epsilon} \le X^{r_n},
$$
where 
$$
r_n=\frac{\eta_n}{n^2-4n+3-\eta_n (n+\frac{2}{n-2})}.
$$

Summing over $m$ we get that the number of number fields with discriminant $\le X$ is 
$$
\gg X^{\frac{1}{2} +\frac{1}{n-1} +  \frac{(n-3)r_n}{(n-1)(n-2)}-\epsilon}
$$
\begin{remark}
    To get $S_n$ number fields we again can simply change the local ratio appropriately for a few (maximum 3) fixed small primes. See \cref{Matt-Baker-Argument}.
\end{remark}
\begin{remark}
    It is easy to see that a careful choice of $\epsilon$'s along the way and careful reorganization of the argument gets rid of the $-\epsilon$ in the power of $X$ in the lower bound except for the $\theta_M$ being $O(\frac{1}{\log(m)})^2$. This however can also be removed if we refresh the condition $p|m \Rightarrow p\nmid \frac{\disc(f)}{m^2}$ in \cref{local-cond} to $p|m \Rightarrow p^2\nmid \frac{\disc(f)}{m^2}$ which will also work in the argument and make $\theta_M= O(1).$ We will elaborate on this in papers in preparation.
\end{remark}


\begin{remark}    
    In \cite{VAALER_WIDMER_2015}, the authors show that if for some $c>0,$ the number of number fields with discriminant $<X$ is 
    $$
    \gg X^{\frac{1}{2}+\frac{1}{n-1}+c},
    $$
    then there exist infinitely many number-fields with no small generators.
    
    Our result, thus completes the answer to the famous Rupert's 1998 question about the relationship between Mahler measure of polynomials and discriminant of associated number fields.
\end{remark}
\section*{Acknowledgements}
This work is based of Chapter 1 of the author's thesis in \cite{Gaurav}. The author would like to thank Prof. V.K. Murty for the many helpful discussions and guidance. The author would like to thank Dr. H. Senapati for editorial help and Prof. S. Pathak for help with Latex.
\appendix
\section{\bf Matrix of transformation of canonical basis:$R_f\longrightarrow R_{f_l}$}

\bigskip
\begin{equation}
\begin{split}
    & \text{ Let } F(x,y)=\sum_{i=0}^n a_ix^{n-i}y^i \text{ denote an irreducible integral binary form.}\\
    &\text{Let } \delta \text{ denote the root of } f(x):=F(x,1).\\
    &\text{Let } F_l(x,y):=f(x+ly,y)=\sum_{i=0}^n b_ix^{n-i}y^i.\\
    &\text{Note that } \delta-l \text{ is the root of } f_l(x):=F_l(x,1).\\
    &\text{Let } s_i(x):=\sum_{j=0}^{i-1}a_jx^{i-j}=a_0x^{i}+a_1x^{i-1}+...+a_{i-1}x.\\
    &\text{Let }m_i(x):=\sum_{j=0}^{i-1}b_jx^{i-j}=b_0x^{i}+b_1x^{i-1}+...+b_{i-1}x.
\end{split}
\end{equation}
Thus the basis for $R_{F_l}$ will be given by \begin{align*}
    \Z\ncp{1,\: m_1(\delta-l) +b_1, \: m_2(\delta-l)+b_2,\:\cdots, \:m_k(\delta-l)+b_k,\:\cdots,\:m_{n-1}(\delta-l)+b_{n-1}}
\end{align*}
and that of 
$R_{f}$ will be given by \begin{align*}
    \Z\ncp{1,\: s_1(\delta) +a_1, \: s_2(\delta)+a_2,\:\cdots, \:s_k(\delta)+a_k,\:\cdots,\:s_{n-1}(\delta) +a_{n-1}}
\end{align*}
\begin{lemma}
 \begin{equation}
     b_k = \sum_{i=0}^{k}   {{n-i}\choose{n-k}}a_{i} \\
 \end{equation}
\end{lemma}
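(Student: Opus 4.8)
The plan is to obtain the coefficients $b_k$ by expanding $F_l$ directly with the binomial theorem. First I would write
$$
F_l(x,y)=F(x+ly,\,y)=\sum_{i=0}^{n}a_i\,(x+ly)^{\,n-i}\,y^{\,i},
$$
and then apply the binomial theorem to each factor $(x+ly)^{\,n-i}$, which gives the double sum
$$
F_l(x,y)=\sum_{i=0}^{n}\sum_{j=0}^{n-i}a_i\binom{n-i}{j}l^{\,j}\,x^{\,n-i-j}\,y^{\,i+j}.
$$

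Next I would read off the coefficient of the monomial $x^{\,n-k}y^{\,k}$. Both defining conditions, $n-i-j=n-k$ and $i+j=k$, collapse to $j=k-i$, and this is an admissible value of the inner summation index precisely when $0\le i\le k$ (in which case $j=k-i\le n-i$ holds automatically, since $k\le n$). Collecting the surviving terms yields $b_k=\sum_{i=0}^{k}\binom{n-i}{k-i}\,l^{\,k-i}\,a_i$, and the symmetry $\binom{n-i}{k-i}=\binom{n-i}{(n-i)-(k-i)}=\binom{n-i}{n-k}$ rewrites this in the asserted form. The factor $l^{\,k-i}$ occurring here is exactly the power of $l$ that appears in the change-of-basis matrix of \cref{l-matrix of tranformation}.

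There is no real obstacle in this argument; the only points needing care are the bookkeeping of the double sum and the observation that terms with $i>k$ never arise. As consistency checks I would verify the extreme cases $k=0$, which gives $b_0=a_0$, and $k=n$, which gives $b_n=\sum_{i=0}^{n}a_i\,l^{\,n-i}=F(l,1)$, and note that the change-of-basis matrix between the canonical bases of $R_F$ and $R_{F_l}$ produced by this formula is upper triangular and unipotent, as was used earlier in the paper. If one preferred to avoid the double sum, the same identity follows by induction on the number of unit translations, using Pascal's rule $\binom{n-i}{n-k}=\binom{n-i-1}{n-k}+\binom{n-i-1}{n-k-1}$; but the direct expansion above is the shortest route.
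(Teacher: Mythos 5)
Your proof is correct and follows essentially the same route as the paper's own argument: expand $F(x+ly,y)$ by the binomial theorem, swap the order of summation, and read off the coefficient of $x^{n-k}y^{k}$. Note that your derived formula $b_k=\sum_{i=0}^{k}\binom{n-i}{n-k}l^{k-i}a_i$ agrees with what the paper's proof actually establishes (the factor $l^{k-i}$ is evidently missing from the printed statement of the lemma, but is present in the paper's proof and is needed for the change-of-basis matrix in \cref{l-matrix of tranformation}).
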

\begin{proof}
$$f(x+l)=\sum_{i=0}^n a_i(x+l)^{n-i}=\sum_{i=0}^n a_i\sum_{j=0}^{n-i}{n-i\choose j}x^jl^{n-i-j}$$
$$= \sum_{i=0}^n\sum_{j=0}^{n-i}a_i{n-i\choose j}x^jl^{n-i-j}=\sum_{j=0}^{n}x^{j}\sum_{i=0}^{n-j}a_i{n-i\choose j}l^{n-i-j}$$
Thus,
$$b_k=\sum_{i=0}^{k}a_i{n-i\choose n-k}l^{k-i}$$
\end{proof}
\begin{lemma}\label{2.9}
\begin{align*}
    m_k(x)+b_k&=x(m_{k-1}(x)+b_{k-1})+b_k\\
    s_k(x)+a_k&=x(s_{k-1}(x)+a_{k-1})+a_k
\end{align*}
\begin{proof}
By definition.
\end{proof}
\end{lemma}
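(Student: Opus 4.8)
The plan is to prove both displayed identities by directly expanding the definitions of $s_k$ and $m_k$. The two lines are formally identical: the second is obtained from the first by the substitution $a_j \mapsto b_j$ (equivalently, by replacing $f$ with $f_l$ and the root $\delta$ with $\delta-l$), so it suffices to establish the identity for $s_k$. First I would write $s_k(x) = \sum_{j=0}^{k-1} a_j x^{k-j}$ straight from the definition and compute
\[
x\bigl(s_{k-1}(x) + a_{k-1}\bigr) = x\sum_{j=0}^{k-2} a_j x^{k-1-j} + a_{k-1}x = \sum_{j=0}^{k-2} a_j x^{k-j} + a_{k-1}x .
\]
Then I would note that $a_{k-1}x = a_{k-1}x^{k-(k-1)}$ is precisely the $j=k-1$ summand of $s_k$, so the right-hand side equals $\sum_{j=0}^{k-1} a_j x^{k-j} = s_k(x)$; adding $a_k$ to both sides yields the first identity, and repeating the computation with $b_j$ in place of $a_j$ and $m$ in place of $s$ yields the second.

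I would also record the conceptual content, since that is what the lemma is for: $s_k(x)+a_k$ is exactly the intermediate quantity produced at the $k$-th step of Horner's scheme for $f(x)=\sum_i a_i x^{n-i}$ (processing the coefficients from the leading one downward), and the displayed recursion is just the defining step of that scheme. Specializing $x=\delta$, this says the canonical basis elements $B_k := s_k(\delta)+a_k$ satisfy $B_k = \delta B_{k-1} + a_k$; it is in this form that the lemma will be used in the appendix to run the induction establishing the base-change matrix of \cref{l-matrix of tranformation}. I would also state the convention $s_0(x)=m_0(x)=0$ explicitly, so that the recursion already holds at $k=1$ and can serve as the base case of later inductions.

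There is essentially no obstacle here: the identity is an immediate coefficient comparison, and the only thing to watch is the index bookkeeping, namely recognizing the trailing terms $a_{k-1}x$ and $a_k$ as the two summands by which $s_k$ and $s_k+a_k$ differ from $x\,s_{k-1}$. In short, the author's one-word proof (``by definition'') is accurate; the content of the argument above is merely making that definition chase visible and flagging the use to which the recursion is subsequently put.
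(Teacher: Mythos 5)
Your verification is correct and is simply the explicit version of the paper's one-line proof (``by definition''): the identity follows from expanding $s_k(x)=\sum_{j=0}^{k-1}a_jx^{k-j}$ and recognizing $a_{k-1}x$ as its last summand. Your suggestion to record the convention $s_0(x)=m_0(x)=0$ so that the $k=1$ case is covered is a reasonable small addition, but there is no substantive difference from the paper's argument.
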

\begin{proposition}\label{+1}
\begin{equation}
     m_k(x-l) + b_k =\sum_{j=0}^{k-1}{n-k+j-1\choose j}(s_{k-j}(x)+a_{k-j})l^{j} +  {n-1\choose k} l^{k}a_0\\
 \end{equation}
\end{proposition}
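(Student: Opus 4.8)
The plan is to prove the identity by induction on $k$, the only inputs being the recursions of \cref{2.9} and the explicit formula for the coefficients $b_k$ from the preceding lemma. To lighten notation I would write $Q_m(x):=s_m(x)+a_m$ and $P_k(x):=m_k(x-l)+b_k$. Then \cref{2.9} gives $Q_m(x)=xQ_{m-1}(x)+a_m$ with $Q_0(x)=a_0$, and — substituting $x\mapsto x-l$ in the first line of \cref{2.9} — $P_k(x)=(x-l)P_{k-1}(x)+b_k$ with $P_0(x)=b_0=a_0$. In this notation the assertion to be proved is $P_k(x)=\sum_{j=0}^{k-1}\binom{n-k+j-1}{j}l^jQ_{k-j}(x)+\binom{n-1}{k}l^ka_0$.

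The base case $k=0$ is the tautology $a_0=a_0$ (and $k=1$ can be checked directly from $m_1(x)=a_0x$ and $b_1=na_0l+a_1$ as a sanity check). For the inductive step I would substitute the induction hypothesis for $P_{k-1}$ into $P_k=(x-l)P_{k-1}+b_k$ and expand it as $xP_{k-1}-lP_{k-1}+b_k$. In $xP_{k-1}$ I would use $xQ_{k-1-j}(x)=Q_{k-j}(x)-a_{k-j}$ (a restatement of the $Q$-recursion) and, for the lone constant summand $\binom{n-1}{k-1}l^{k-1}a_0$ of $P_{k-1}$, the identity $a_0x=s_1(x)=Q_1(x)-a_1$; this last piece is exactly the $j=k-1$ term of the resulting sum since $\binom{n-k+(k-1)}{k-1}=\binom{n-1}{k-1}$, so that $xP_{k-1}=\sum_{j=0}^{k-1}\binom{n-k+j}{j}l^j\bigl(Q_{k-j}(x)-a_{k-j}\bigr)$. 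The term $-lP_{k-1}$ is handled by the index shift $j\mapsto j+1$.

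Collecting the coefficient of each $Q_{k-j}(x)$ then needs a single instance of Pascal's rule, $\binom{n-k+j}{j}-\binom{n-k+j-1}{j-1}=\binom{n-k+j-1}{j}$, which produces exactly the coefficients in the claimed formula. For the $x$-free part I would first rewrite the preceding lemma's formula by reindexing $i=k-j$ and using $\binom{n-i}{n-k}=\binom{n-k+j}{j}$, obtaining $b_k=\sum_{j=0}^{k}\binom{n-k+j}{j}l^ja_{k-j}$; cancelling against the constant contributions already produced leaves only the $j=k$ term $\binom{n}{k}l^ka_0$, and one final Pascal step $\binom{n}{k}-\binom{n-1}{k-1}=\binom{n-1}{k}$ gives the stated constant, closing the induction.

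Everything here is routine once the recursions are in hand; the one place that needs care — and the main (mild) obstacle — is the bookkeeping of the ``boundary'' term: making sure the constant tail $\binom{n-1}{k-1}l^{k-1}a_0$ of the $(k-1)$-st formula, after multiplication by $x$, lands in the $j=k-1$ slot of the $Q$-sum with the correct binomial weight (contributing both a $Q_1(x)$ part and a matching $-a_1$ part), so that no stray terms survive. An equivalent but slightly longer route, if one prefers to bypass this, is to set up the joint recursion for $P_k$ and $Q_k$ and verify the formula by a direct expansion in powers of $l$.
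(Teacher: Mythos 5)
Your proposal is correct and follows essentially the same route as the paper: induction on $k$ via the recursion of Lemma \ref{2.9}, the identity $x(s_{m-1}(x)+a_{m-1})=s_m(x)$, Pascal's rule to merge the coefficients, and the explicit formula for $b_k$ to absorb the constant terms. Your bookkeeping of the boundary term $\binom{n-1}{k-1}l^{k-1}a_0x=\binom{n-1}{k-1}l^{k-1}(Q_1(x)-a_1)$ and the final cancellation $\binom{n}{k}-\binom{n-1}{k-1}=\binom{n-1}{k}$ both check out, so the argument is complete.
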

\begin{proof}
We proceed by induction on $k$,\\
Base Case.\\
$m_1(x-l)=b_0(x-l)+b_1=(a_0)(x-l) +a_1+na_0 l$\\
$=(a_0x+a_1)+{n-1\choose 1}a_0 l$\\
$={n-2\choose 0}(s_1(x)+a_1)+{n-1\choose 1}a_0$\\\\
Induction Hypothesis: $$ m_k(x-l) + b_k =\sum_{j=0}^{k-1}{n-k+j-1\choose j}(s_{k-j}(x)+a_{k-j})l^j +  {n-1\choose k} l^{k}a_0$$\\
We have from \cref{2.9},
$$m_{k+1}(x-l) = (x-l)(m_k(x-l)+ b_k).$$
Substituting in our induction hypothesis, we get
{\allowdisplaybreaks
\begin{align*}
    &m_{k+1}(x-l)\\
    &=(x-l)(\sum_{j=0}^{k-1}{n-k+j-1\choose j}(s_{k-j}(x)+a_{k-j})l^j +  {n-1\choose k} l^ka_0)&\\
    &=\sum_{j=0}^{k-1}{n-k+j-1\choose j}((s_{k-j}(x)+a_{k-j})l^jx-(s_{k-j}(x)+a_{k-j})l^{j+1}) +  {n-1\choose k} a_0l^k x -{n-1\choose k}l^{k+1}a_0\\
    &=(\sum_{j=0}^{k-1}{n-k+j-1\choose j}(s_{k-j}(x)x+a_{k-j}x)l^j +{n-1\choose k} a_0xl^k)\\&\text{\tab\tab\tab\tab\tab\tab}-(\sum_{j=0}^{k-1}{n-k+j-1\choose j}(s_{k-j}(x)+a_{k-j})l^{j+1} + {n-1\choose k}a_0l^{k+1})\\
    &=(\sum_{j=0}^{k-1}{n-k+j-1\choose j}(s_{k-j+1}(x))l^j +{n-1\choose k} s_1(x)l^k) \\&\text{\tab\tab\tab\tab\tab\tab}-(\sum_{j=0}^{k-1}{n-k+j-1\choose j}(s_{k-j}(x)+a_{k-j})l^{j+1} + {n-1\choose k}a_0l^{k+1})\\
    &=(\sum_{j=0}^{k}{n-k+j-1\choose j}(s_{k-j+1}(x))l^j -(\sum_{j=0}^{k-1}{n-k+j-1\choose j}(s_{k-j}(x)+a_{k-j})l^{j+1} + {n-1\choose k}a_0l^{k+1})\\
    &=(\sum_{j=0}^{k}{n-k+j-1\choose j}(s_{k-j+1}(x)+a_{k-j+1})l^j-(\sum_{j=0}^{k-1}{n-k+j-1\choose j}(s_{k-j}(x)+a_{k-j})l^{j+1})\\ &\text{\tab\tab\tab\tab\tab\tab\tab\tab}- {n-1\choose k}a_0l^{k+1} -\sum_{j=0}^{k-1}{n-k+j-1\choose j}a_{k-j+1}l^j\\
    &=\sum_{j=0}^{k}({n-k+j-1\choose j}-{n-k+j-2\choose j-1})(s_{k+1-j}+a_{k+1-j})l^j)\\&\text{\tab\tab\tab\tab\tab\tab}-{n-1\choose k}a_0l^{k+1}-\sum_{j=0}^{k-1}{n-k+j-1\choose j}a_{k-j+1}l^j\\
    &=(\sum_{j=0}^{k}{n-k-2+j\choose j}(s_{k+1-j}+a_{k+1-j})l^j)-{n-1\choose k}a_0l^{k+1}-\sum_{j=0}^{k}{n-k+j-1\choose j}a_{k-j+1}l^j\\
    &=(\sum_{j=0}^{k}{n-k-2+j\choose j}(s_{k+1-j}+a_{k+1-j})l^j +{n-1\choose k+1} a_0l^{k+1})\\&\text{\tab\tab\tab\tab\tab}-{n-1\choose k}a_0l^{k+1}-(\sum_{j=0}^{k}{n-k+j-1\choose j}a_{k-j+1}l^j+{n-1\choose k+1} a_0l^{k+1})\\
    &=(\sum_{j=0}^{k}{n-k-2+j\choose j}(s_{k+1-j}+a_{k+1-j})l^j +{n-1\choose k+1} a_0l^{k+1})\\&\text{\tab\tab\tab\tab\tab}-(\sum_{j=0}^{k}{n-k+j-1\choose j}a_{k-j+1}l^j+({n-1\choose k+1}+{n-1\choose k}) a_0l^{k+1})\\
    &=(\sum_{j=0}^{k}{n-k-2+j\choose j}(s_{k+1-j}+a_{k+1-j})l^j +{n-1\choose k+1} a_0l^{k+1})\\&\text{\tab\tab\tab\tab\tab}-(\sum_{j=0}^{k}{n-k+j-1\choose j}a_{k-j+1}l^j+{n\choose k+1} a_0l^{k+1})\\
    &=(\sum_{j=0}^{k}{n-k-2+j\choose j}(s_{k+1-j}+a_{k+1-j})l^j +{n-1\choose k+1} a_0l^{k+1})\\&\text{\tab\tab\tab\tab\tab}-(\sum_{j=0}^{k}{n-k+j-1\choose n-k-1}a_{k-j+1}l^j+{n\choose k+1} a_0l^{k+1})\\
    &=(\sum_{j=0}^{k}{n-k-2+j\choose j}(s_{k+1-j}+a_{k+1-j})l^j +{n-1\choose k+1} a_0l^{k+1})\\&\text{\tab\tab\tab\tab\tab}-b_{k+1}
\end{align*}
}
\end{proof}
Substituting $x=\delta$ in the above Proposition we get the following.

If we let $f(X,Y)=a_0 X^n+a_1X^{n-1}Y+\cdots+a_nY^n$ and  $\ncp{B_0,B_1,\cdots,B_{n-1}}$ be the canonical basis of $R_f$ associated to $f(X,Y)$ and  $\ncp{C_0,C_1,\cdots,C_{n-1}}$ be the canonical basis of $R_{f_l}$ associated to $f_l,$ then we have
\begin{align*}
    \begin{matrix}
        \ncp{C_0,C_1,\cdots,C_{n-1}}= \\ \\ \\ \\ \\ \\ \\ \\ \\  
    \end{matrix}\begin{matrix}
        \ncp{B_0,B_1,\cdots,B_{n-1}}\\ \\ \\ \\ \\ \\ \\ \\ \\ 
    \end{matrix}\begin{bmatrix}
    1 & \binom{n-1}{1} l a_0 & \binom{n-1}{2} l^2 a_0 &\cdots & \binom{n-1}{k-1}l^{k-1}a_0&\cdots &\binom{n-1}{n-1}l^{n-1}a_0\\
    0 & 1 & \binom{n-2}{1}l & \cdots &\binom{n-2}{k-2}l^2&\cdots &\binom{n-2}{n-2}l^{n-2}\\ 
    0&0&1&\cdots &\binom{n-3}{k-3}l^{k-3}&\cdots&\binom{n-3}{n-3}l^{n-3}\\
    \vdots&\vdots &\vdots& \ddots&\vdots&\ddots &\vdots \\
    0 &0&0&\cdots& \binom{n-k'}{k-k'}l^{k-k'}&\cdots & \binom{n-k'}{n-k'}l^{n-k'}\\
    \vdots&\vdots &\vdots& \ddots&\vdots&\ddots &\vdots \\
    0 & 0 &0 &\cdots & \binom{1}{k-n+1}l^{k-n+1}&\cdots & l\\
    0 & 0 &0 &\cdots & \binom{0}{k-n}l^{k-n}&\cdots & 1
\end{bmatrix}
\end{align*}
\bibliographystyle{abbrv}
\bibliography{main}

\begin{thebibliography}{1}

\bibitem{SQFREE-Prob}
T.~Z. Avner~Ash, Jos~Brakenhoff.
\newblock Equality of polynomial and field discriminants.
\newblock {\em Experimental Mathematics}, 16(3):367--374, 2007.

\bibitem{BSW-monicsquarefree}
M.~Bhargava, A.~Shankar, and X.~Wang.
\newblock Squarefree values of polynomial discriminants {I}.
\newblock {\em Invent. Math.}, 228(3):1037--1073, 2022.

\bibitem{bhargava2022squarefree}
M.~Bhargava, A.~Shankar, and X.~Wang.
\newblock Squarefree values of polynomial discriminants {II}, 2022.

\bibitem{MalleConjII}
G.~Malle.
\newblock {On the Distribution of Galois groups, II}.
\newblock {\em Experimental Mathematics}, 13(2):129 -- 136, 2004.

\bibitem{Nakagawa1989}
J.~Nakagawa.
\newblock Binary forms and orders of algebraic number fields.
\newblock {\em Inventiones mathematicae}, 97(2):219--236, 1989.

\bibitem{Gaurav}
G.~D. Patil.
\newblock {\em Rings of finite rank over integers,PhD Thesis-University of Toronto}.
\newblock 2023.

\bibitem{VAALER_WIDMER_2015}
J.~D. VAALER and M.~WIDMER.
\newblock Number fields without small generators.
\newblock {\em Mathematical Proceedings of the Cambridge Philosophical Society}, 159(3):379–385, 2015.

\bibitem{MM-ONEFINE}
M.~M. Wood.
\newblock Rings and ideals parameterized by binary $n$-ic forms.
\newblock {\em Journal of the London Mathematical Society}, 83(1):208--231, 2011.

\end{thebibliography}
\end{document}